\documentclass[preprint,12pt]{elsarticle}
\usepackage{graphicx} % Required for inserting images
\usepackage{amsmath,amssymb,amsfonts,amsthm}
\usepackage[english]{babel}
\usepackage{hyperref}
\newtheorem{assumption}{Assumption}
\newtheorem{remark}{Remark}
\newtheorem{definition}{Definition}
\newtheorem{proposition}{Proposition}
\newtheorem{problem}{Problem}
\newtheorem{lemma}{Lemma}
\newtheorem{theorem}{Theorem}
\usepackage{xcolor}
\newtheorem{example}{Example}
\newcommand{\be}{\begin{equation}}
	\newcommand{\ee}{\end{equation}}
\newcommand{\ben}{\begin{equation*}}
	\newcommand{\een}{\end{equation*}}
\newcommand{\mc}{\mathcal}
\newcommand{\ba}{\begin{array}}
	\newcommand{\ea}{\end{array}}
\newcommand{\R}{\mathbb{R}}

\newenvironment{contexample}{
	\addtocounter{example}{-1} \begin{example}[continued]}{
	\end{example}\addtocounter{example}{0}}

\renewcommand{\d}{\text{d}}

\begin{document}
		\begin{frontmatter}
		
		\title{On Convexity of Optimal Multi-Commodity Freeway Network Control}
		
		% First Author
		\author[inst1]{Davide Sipione\corref{cor1}\fnref{funding}}
		\ead{davide.sipione@polito.it}
		
		% Second Author
		\author[inst1]{Giacomo Como\fnref{funding}}
		\ead{giacomo.como@polito.it}
		
		% Third Author
		\author[inst2]{Gustav Nilsson}
		\ead{gustav.nilsson@inria.fr}
% Affiliations
\address[inst1]{Department of Mathematical Sciences, Politecnico di Torino, Turin, Italy}
\address[inst2]{Univ. Grenoble Alpes, Inria, CNRS, Grenoble INP, GIPSA-Lab, 38000 Grenoble, France}
\cortext[cor1]{Corresponding author}

\begin{abstract}
	We study a multi-commodity Freeway Network Control (FNC) problem aiming at achieving optimal operation of a transportation network through the use of ramp metering and variable speed limits. Straightforward formulations of both single- and multi-commodity FNC problems based on the Cell Transmission Model are known to be non-convex, mainly due to the congestion effects at diverge junctions. However, recent studies have shown that it is possible to formulate a tight convex relaxation of the single-commodity FNC problem.
	
	We extend  these results to the multi-commodity FNC problem by considering concave commodity-specific demand functions and concave aggregate supply functions, so that different variable speed limits can be applied to different commodities. Hence, it is possible to efficiently compute the optimal control action to reduce congestion phenomena in the network. We also present a case study of a segment of the freeway network in California, using data from the PeMS database, to demonstrate the effectiveness of the proposed solution. Finally, we draw a comparison with a setting where the multi-commodity flows are modeled and controlled as a single-commodity flow, to emphasize the relevance of acting separately on different classes of vehicles.

	%we compare the single and multi-commodity control to emphasize on the relevance of acting separately on different classes of vehicles.
\end{abstract}

\begin{keyword}
	Transportation Networks, Multi-Commodity, Optimal Control, Freeway Network Control
\end{keyword}

\fntext[funding]{This work was partially supported by the Research Project PRIN 2022 “Extracting Essential Information and Dynamics from Complex Networks” (Grant Agreement number 2022MBC2EZ) funded by the Italian Ministry of University and Research.}

\end{frontmatter}

\section{Introduction}
For decades, researchers have studied control strategies to reduce congestion and enhance both societal and user-level performance in freeway networks, see~\cite{Peeta:2001}. Effective traffic management remains a crucial topic in transportation research, as travel demand grows and user heterogeneity increases. Moreover, the limited amount of existing infrastructure, combined with the need to address environmental challenges, highlights the necessity for a fundamental shift in how freeway systems are managed and optimized. 

A realistic framework to model first-order traffic dynamics is the celebrated Cell Transmission Model (CTM), first introduced in~\cite{DAG1, DAG2}, which successfully discretizes the Lighthill-Whitham and Richards (LWR) model of kinematic waves, see \cite{KynWave} and \cite{KynWave2}. In this setting, it is possible to formulate the Freeway Network Control (FNC) problem, an optimal control problem that aims at achieving optimal traffic conditions through the use of variable speed limiting--- see~\cite{VLS1,VLS2}--- and ramp metering, which involves the usage of traffic signals to limit the traffic flow entering the network~\cite{Gomes:2006,Mura:2012,Schmitt:2017}. Early attempts at solving this problem can be found in \cite{Zhang1996,Zhang1999}, where the authors investigate ramp metering on a first-order discrete-time model and numerically solve the relative non-linear optimal control problem through gradient-based methods. Moreover, in \cite{Gomes:2006} the authors propose the Asymmetric Cell Transmission Model (ACTM) and show that the non-linear ramp metering optimal control problem reduces to a linear program under suitable conditions. This study is extended to also include variable speed limits in~\cite{Mura2012}. 

Some recent studies have investigated the Freeway Network Control problem as a subproblem of the Dynamic Traffic Assignment, where dynamical routing is included in the control action, which is inherently non-convex \cite{Non-Conv}. In particular, the authors of~\cite{ConvAndRob,ConvAndRob2} introduce tight relaxations of the FNC problem to yield convex formulations for any given network, by generalizing results obtained in~\cite{Zili:2000}. Building on these results, an exact convex formulation with controlled merging junction was addressed in \cite{Schmitt2018}.
Hence, this problem can be easily solved with available convex optimization solvers, such as \textit{CVX} (\cite{cvx}, \cite{gb08}). Moreover, the FNC is of interest and well-suited for distributed optimization, which can significantly reduce computational complexity, as shown in~\cite{Distr,Distr2}. For a more comprehensive overview of the freeway network
 control problem, we refer the reader to~\cite{Siri2011}.

As mentioned before, transportation networks have an intrinsically heterogeneous nature (with different vehicles such as cars, buses, trucks sharing the infrastructure), but recently even more so by the growing adoption of autonomous cars. Hence, it is becoming crucial to assess the intrinsic multi-commodity nature of transportation networks and to model the interactions between different commodity flows in a dynamic manner. Motivated by this, several models for multi-commodity traffic flows in networks have been proposed: the work in \cite{Herty:2008} adopts a PDE-based approach, while \cite{Nilsson:2014} and \cite{joumaa2023} propose models based on the CTM. Regardless of the modeling approach, the multi-commodity setting enables a significantly more complex ---yet realistic--- behavior even in static scenarios, as proven in \cite{Leighton:1999} where the max-flow min-cut algorithm cannot compute exactly the maximum throughput. 

Moreover, many studies have been published, in which different commodities are introduced to capture different traffic flow behavior. In particular, in~\cite{SO-DTA}, a CTM-like model is used in which the population is split into controlled and selfish agents but their demand function is shared. Their objective is to minimize the total congestion in the network by solving a non-convex optimization problem through the adjoint method and a multi-start approach. Similarly, in the work~\cite{SO-DTA2,SO-DTA3,joumaa2025}, the aim is to improve the system's global state by controlling a varying subset of compliant users, through variable routing or variable speed limits.  On the other hand, in~\cite{Nillson:2018}, the existence of a Nash Equilibrium for a selfish and a cooperative fleet of vehicles seeking two different optimal policies is characterized. Moreover, in~\cite{Multi3}, a model in which the population is split into cars and trucks is derived, accounting for their different lengths and free-flow speeds. Similarly, in \cite{Multi5}, cars coexist with powered two-wheelers that can overtake in between lanes, thus being less affected by congestion phenomena.  Another attempt at multi-commodity modeling is achieved in~\cite{Multi4}, where the creeping ---overtaking--- of large vehicles is considered. 

Multi-commodity modeling efforts have also been made for higher-order models, such as in~\cite{DEO2009}, where the METANET macroscopic traffic flow model is extended to the multi-commodity case, with control achieved through variable speed limits and ramp metering using a Model Predictive Control strategy. Another modeling framework where multi-commodity modeling becomes handy is when different groups of users have access to different information. With the objective of minimizing the total congestion of the network, the authors in~\cite{Festa:2019} and~\cite{Multi2} split the population based on the amount of information they can receive from the network, so that a fraction of the users can avoid the most congested areas. However, in some of these studies, it is shown that the interaction of commodities leads again to non-convex formulations of the optimization problems.

In this paper, we introduce a dynamical multi-commodity flow network model based on the CTM, in which the traffic flow is driven by commodity-specific demand functions and shared supply functions. Unlike most works, we apply ramp metering and variable speed separately to each commodity, which remains practically feasible, as shown in~\cite{Multi_comm}. Building on the arguments made in~\cite{ConvAndRob}, we formulate a tight relaxation of the multi-commodity FNC, by assuming both concavity of the demand and supply functions and convexity of the cost function. To corroborate our mathematical results, we validate the model using real traffic data from the \textit{Caltrans Performance Measurement System (PeMS)} under several settings.

The rest of the paper is organized as follows: in Section 2 the multi-commodity model is formulated and the Multi-Commodity Freeway Network Control (MC-FNC) problem is stated. Then, in Section 3, a tight and convex relaxation of the MC-FNC is provided and solved. In Section 4 simulations with real data are performed and results discussed, while in Section 5 conclusions are drawn.  

\subsection{Notation}
The sets $\mathbb{R}$ and $\mathbb{R}_+$ represent the sets of real and non-negative numbers, respectively. Given two finite sets $\mathcal{A}$ and $\mathcal{B}$ then $\mathbb{R}^\mathcal{A}$ is the set of real numbers indexed by elements of A. Similarly, $\mathbb{R}^{\mathcal{A} \times \mathcal{B}}$ is the set indexed by the product set of $\mathcal{A}$ and $\mathcal{B}$. A directed multigraph $\mathcal{G} = (\mathcal{V},\mathcal{E})$ has a set of nodes $\mathcal{V}$ and a finite multi-set of links $\mathcal{E}$. 
The tail and head node of a link $i$ in $\mc{E}$ are denoted by $\tau_i$ and $\sigma_i$, such that the link $i = \left( \tau_i,\sigma_i\right)$ is directed from $\tau_i$ to $\sigma_i$.

\section{Freeway Network Control of a CTM-based Dynamical Multi-Commodity Flow Network Model}

In this section, our model for multi-commodity transportation networks is introduced, along with the optimal control problem of interest. 

\subsection{Dynamical Multi-commodity Flow Network Model}
A transportation network is modeled as a directed multigraph $\mc{G} = (\mc{V},\mc{E})$, where $\mc{V}$ and $\mc{E}$ denote the sets of nodes and links, respectively. Different from a graph, a multigraph allows for multiple parallel links between a node pair. In this approach, links ---referred to as \textit{cells}--- represent sections of roads. Nodes correspond to \textit{junctions} that connect different cells, and can be categorized as diverge, merge, mixed, or ordinary.  Each cell is one-directional and may have several lanes. A diverge junction has only one incoming cell and more than one outgoing, a merge cell has several incoming cells to one outgoing, while a mixed junction has several incoming and outgoing cells. An ordinary junction has only one incoming and one outgoing cell, and is introduced for spatial discretization purposes of the dynamics. Furthermore, two subsets of special cells are introduced: the sets of onramps $\mc{R} \subset \mc{E}$ and offramps $\mc{S} \subset \mc{E}$, through which exogenous flow enters and exits the network, respectively. A cell can not be both an onramp and an offramp, so $\mc{R} \cap \mc{S} = \emptyset$. To simplify the notation for identifying cells that are immidiatly connected through a node, we introduce the notation $\mc{A}$ to refer to the set of adjacent cells, i.e. $\mc{A} = \left\{(i,j) \in \mc{E} \times \mc{E} : \tau_i = \sigma_j\right\}$. 

A finite set of commodities $\mc{K}$, representing different types of classes of vehicles, share the roadspace on every cell. For every cell $i \in \mc{E}$ and $k \in \mc{K}$, the state variable $x_i^{(k)} \geq 0$ represents the traffic volume of commodity $k$ in cell $i$. Hence, the state space of the model is the non-negative orthant
\be\label{eq:statespace}
\mc{X} = \R^{\mc{E}\times\mc{K}}_+ \;.
\ee
Accordingly, the state of the system is a time-varying vector $x = x(t)$, whose entries $x_i^{(k)} = x_i^{(k)}(t)$ denote the traffic volume of commodity $k$ in a cell $i$ at time $t \geq 0$. For each $k$ in $\mc{K}$, we denote by $x^{(k)}$ in $\R_+^{\mc E}$ the vector of traffic volumes of commodity $k$ across all cells.

To model the flow propagation of each commodity in the network, two families of functions are introduced. The maximum physically possible outflow $d_i^{(k)}(x_i^{(k)})$ of a commodity $k$ from a cell $i$ is assumed to depend on the traffic volume $x_i^{(k)}$ of  that commodity in that cell through a \emph{demand function} $$d_i^{(k)}: \R_+ \rightarrow \R_+\,.$$ Moreover, the maximum possible inflow $s_i(\sum\nolimits_{k \in \mc{K}}{w_i^{(k)}x_i^{(k)}})$ into a cell $i$ is assumed to depend on a weighted sum of the traffic volumes of the different commodities in cell $i$ through an aggregate \emph{supply function}  $$s_i: \R_+ \rightarrow \R_+\,.$$ Here, $w_i^{(k)}> 0$ is a positive weight on commodity-$k$ traffic volume in cell $i$. 
\begin{assumption}\label{ass:1}
	For every cell $i$ in $\mc{E}$ and commodity $k$ in $\mc{K}$ the demand functions $d_i^{(k)}$ is Lipschitz-continuous, concave non-decreasing, and such that $d_i^{(k)}(0) = 0$. Moreover, for every cell $i$ in $\mc{E}$ the supply function $s_i$ is Lipschitz-continuous, concave, and non-increasing.
\end{assumption}

\begin{example} \label{ex:dcartrucks}
	Consider a transportation network, with two commodities $\mc K = \{\text{car}, \text{truck}\}$,  in which each cell $i$ in $\mc{E}$ has a length $L_i$. Although the infrastructure is shared by both cars and trucks, they have different free-flow speeds,  $v_{ff}^{(car)}$ and $v_{ff}^{(truck)}$, respectively. Hence, their linear demand functions read
	\be\label{eq:dmnd}
	d^{(car)}_i(x_i^{(car)}) = \frac{v_{ff}^{(car)}}{L_i}x_i^{(car)}\;, \qquad d^{(truck)}_i(x_i^{(truck)}) = \frac{v_{ff}^{(truck)}}{L_i}x_i^{(truck)}\;, 
	\ee
	for every cell $i$ in $\mc E$.
	The supply function models the available space on a cell $i$ in $\mc{E}$, hence it will depend on the current mixture of cars and trucks. To take into account their different lengths, the supply functions will have the form
	
	\be\label{eq:supply}
		s_i\left(\sum\limits_{k \in \mc{K}}l^{(k)}x_i^{(k)}\right) = \max \left(0,\frac{w_i}{L_i} \cdot \frac{\beta \cdot l_i n_{lanes}  - l^{(car)}x_i^{(car)} - l^{(truck)}x_i^{(truck)}}{\tilde{l}}\right) \,,
	\ee
	where $\tilde{l}$ is the average vehicle length, i.e., $ \tilde{l} = pl^{(car)} + (1-p)l^{(truck)}$, $p$ is the percentage of cars in the network, $w_i$ is the wave speed, $n_{lanes}$ is the number of lanes in the cell, and $\beta$ is a vector of coefficients used to account for reduced capacity of the cell, i.e. closed lanes. It should be noted that although $\tilde{l}$ will be a static estimate, this estimate will only affect the slope of the supply function, i.e., regardless of the mixture between cars and trucks, the model will not allow the cell to be filled beyond capacity.
\end{example}

The network is subject to an exogenous inflow $\lambda_i(t) \geq 0$ at each onramp~$i$ in $\mc{R}$, representing the rate at which new flow enters the network. For all other cells $i$ in $\mc{E} \setminus \mc{R}$, the exogenous inflow is identically $0$, i.e., $\lambda_i(t) \equiv 0$. For each $k$ in $\mc{K}$, we denote by $\lambda^{(k)}$ the vector of exogenous inflows of commodity~$k$ across all cells. %The complete exogenous inflow vector is denoted by $\lambda = (\lambda_i^{(k)})_{i \in \mc{E},k \in \mc{K}}$.

Different commodities may not use the same roads or, at diverge junctions, may split according to different proportions. To model this behavior, for each commodity $k$ in $\mc{K}$, we introduce time-varying commodity-dependent routing matrices $R^{(k)}(t)$ in $\R_+^{\mc{E}\times\mc{E}}$. Their entries $0 \leq R_{ij}^{(k)}(t) \leq 1$, referred to as \textit{turning ratios}, denote the fraction of flow of commodity $k$ routed from cell~$i$ to cell $j$.
\begin{assumption}\label{ass:2}
	For each commodity $k$ in $\mc{K}$ and $t \geq 0$, the corresponding routing matrix $R^{(k)}(t)$ satisfies the following conditions:
	\begin{enumerate}
		\item[(i)] 
		\be\label{eq:sub}
		\sum\limits_{j \in \mc{E}}{R_{ij}^{(k)}}(t) = \begin{cases}
			1 & i \in \mc{E} \setminus \mc{S} \\
			0 & i \in \mc{S}
		\end{cases}\; ;
		\ee
		\item[(ii)] 
		\be\label{eq:zero}
		R_{ij}^{(k)}(t) = 0\, , \quad \forall (i,j) \in \mc{E} \times \mc{E} \setminus \mc A \;.
		\ee
	\end{enumerate}
\end{assumption}
The first part of Assumption \ref{ass:2} ensures that traffic can only leave the network through an offramp, while the second part of Assumption \ref{ass:2} implies that cells may send flow to adjacent cells only.

\iffalse
Given this assumption, we are ready to introduce the concept of Multi-commodity Transportation Network.
		\begin{definition}
					A Multi-commodity Transportation Network (MTN) is the tuple of a nonempty finite directed multigraph $\mathcal{G} = (\mathcal{V},\mathcal{E})$, a finite set of commodities $\mathcal{K}$, supply functions $s_i$ and demand functions $d_i^{(k)}$ satisfying Assumption \ref{ass:1}, and routing matrices $R^{(k)}$ satisfying Assumption \ref{ass:2}, for every cell $i$ in $\mc E$ and commodity $k$ in $\mc K$. 
		\end{definition}
\fi

\subsection{Controlled FIFO Dynamical Multi-Commodity Flow Networks}

 In this formulation, control, through variable speed limits and ramp metering, is achieved via time-varying parameters $0 \leq \alpha_i^{(k)}(t) \leq 1$, which act on the demand functions by modulating their slope and limiting the rate at which vehicles of commodity $k$ leave cell $i$. With a slight abuse of notation,
\be\label{eq:alfa}
d_i^{(k)}(x_i^{(k)}) := \alpha_i^{(k)}d_i^{(k)}(x_i^{(k)})\,, \qquad \forall i \in \mc{E}, \quad k \in \mc{K}\;.
\ee
This formulation allows for assigning distinct control parameters to each commodity to achieve separate speed limits for each commodity. While this approach is less common, most existing works apply uniform control across all commodities (see \cite{SO-DTA}), it remains feasible. As discussed in~\cite{Multi_comm}, many countries in Europe implement different speed limits between heavy and light vehicles. Another potential application is the differentiation of speed limits for autonomous versus conventional vehicles, or different classes of autonomous vehicles controlled centrally.

The dynamic of the system is governed by a mass conservation law, which states that the variation in traffic volume of commodity $k$ in $\mc{K}$ in a cell $i$ in~$\mc{E}$ is equal to the difference between the total inflow to and the total outflow from that cell. Formally,

\be\label{eq:massCons}
	\dot{x}_{i}^{(k)}(t)  =   \lambda_{i}^{(k)}(t) + \sum\limits_{j \in \mathcal{E}}{f_{ji}^{(k)}(x)}  - z_{i}^{(k)}(x) \,,
\ee
where $f_{ji}^{(k)}(x)$ denotes the flow of commodity $k$ sent from cell $j$ to cell $i$ and $z_i^{(k)}(x)$ denotes the total outflow of commodity $k$ from cell $i$ as
\be\label{eq:zik}
		z_{i}^{(k)}(x)  =  \begin{cases}
		\sum\limits_{j \in \mathcal{E}}{f_{ij}^{(k)}( x)} \,, & i \in \mathcal{E} \setminus \mathcal{S} \,, \\
		\alpha_i^{(k)}d_{i}^{(k)}(x_{i}^{(k)}) \,, & i \in \mathcal{S}\,,
	\end{cases}
\ee
both expressed as functions of the current state $x$. Equation \eqref{eq:massCons} can be re-stated in a more compact form, by considering vector notation:
	\be\label{compact}
\dot{x}^{(k)}(t) = \lambda^{(k)}(t) + f^{(k)}(x) - z^{(k)}(x) \,, \qquad \forall k \in \mathcal{K}\,.
\ee

The only step left is to model the relationship between the cell-to-cell flow $f_{ij}^{(k)}$ and the state $x$. To this end, a First-In-First-Out (FIFO) allocation rule is employed. Formally,

\begin{equation}\label{eq:fifo}
	f_{ij}^{(k)}(x) = \gamma_i^F(t,x)R_{ij}^{(k)}(t)\alpha_i^{(k)}d_{i}^{(k)}(x_{i}^{(k)}) \; ,
\end{equation}
where $\gamma_{i}^{F}  =\max \{  \gamma \in [0,1] :    \eqref{eq:4.5}\}$,
\be\label{eq:4.5}
\gamma \cdot \underset{\underset{R_{ij}^{(k)}(t)>0}{j \in \mathcal{E}}}{\max} \:\sum\limits_{k \in \mathcal{K}}\sum\limits_{h \in \mathcal{E}} R_{hj}^{(k)}(t)\alpha_h^{(k)}d_{h}^{(k)}(x_{h}^{(k)}) 
\le  s_j(\sum\limits_{k \in \mathcal{K}}{w_j^{(k)}x_{j}^{(k)}})\,.\ee

Equation \eqref{eq:4.5} implies that in a diverge junction, if a downstream cell is congested, then the flow directed to every downstream cell is limited such that the turning ratios of all commodity flows from that junction will still split according to their given turning ratios.  Moreover, regarding ordinary junctions, this implies a novelty with respect to the single-commodity setting. When considering a single commodity model and ordinary junction, \eqref{eq:4.5} is equivalent to the minimum between the demand and the supply function. However, in our setting, $\gamma_i^{F}$ is needed to ensure that the remaining supply is shared in proportion to the different commodities' demands.

%1Given all these considerations it is possible to define the Dynamical Multi-Commodity Flow Network.
We are now ready to define the Controlled Dynamical Multi-Commodity Flow Network formally:
\begin{definition}
	Given a nonempty finite directed multigraph $\mathcal{G} = (\mathcal{V},\mathcal{E})$, a finite set of commodities $\mathcal{K}$, supply functions $s_i$ and demand functions $d_i^{(k)}$ satisfying Assumption~\ref{ass:1}, and routing matrices $R^{(k)}$ satisfying Assumption~\ref{ass:2}, exogenous inflow array $\lambda=(\lambda_{i}^{(k)})_{i\in\mc E,k\in\mc K}$, and control parameters $\alpha_i^{(k)}$ in $\left[0,1\right]$, a Controlled Dynamical Multi-Commodity Flow Network (C-DMFN) is a dynamical system satisfying equations \eqref{eq:alfa}--\eqref{eq:4.5}.
\end{definition}
It has already been observed in \cite{Stab_D} that this problem is well-posed without control, i.e., when $\alpha_i^{(k)} \equiv 1$ for every cell $i$ in $\mc E$ and commodity $k$  in  $\mc K$. However, the same proof extends to the controlled settings.
\begin{lemma}
	Consider a C-DMFN and define $\mc{X}$ as in \eqref{eq:statespace}. Then, there exists a map
	\ben
	\varphi : \R_+ \times \mc{X} \rightarrow \mc{X}
	\een
	such that for every $\bar{x}$ in $\mc{X}$, $x(t) = \varphi(t,\bar{x})$ is the unique solution of the C-DMFN with initial condition $x(0) = \bar{x}$.
\end{lemma}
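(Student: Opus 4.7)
The plan is to establish existence and uniqueness by showing that the right-hand side of the compact dynamics \eqref{compact} defines a globally Lipschitz vector field on $\mc X$ whose trajectories stay in the non-negative orthant, and then invoke the Picard–Lindelöf theorem. Since the measurable time-dependence enters only through the exogenous inflows $\lambda(t)$, the routing $R^{(k)}(t)$, and the control $\alpha_i^{(k)}(t)$, which appear linearly and with bounded coefficients, Carathéodory's existence theorem applies provided we get Lipschitz continuity in the state $x$, uniformly in $t$. Because the paper indicates that \cite{Stab_D} already handles the uncontrolled case, my plan is to check that the introduction of the multiplicative controls $\alpha_i^{(k)}(t)\in[0,1]$ does not disrupt any of the regularity or invariance arguments used there.

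First, I would verify that the outflow map $z^{(k)}(x)$ and the inflow map $f^{(k)}(x)$ are Lipschitz in $x$. For an offramp cell $i\in\mc S$, $z_i^{(k)}(x)=\alpha_i^{(k)}d_i^{(k)}(x_i^{(k)})$ is Lipschitz by Assumption~\ref{ass:1} and $|\alpha_i^{(k)}|\le 1$. For the remaining cells, the delicate object is $\gamma_i^F(t,x)$, which is defined as the minimum over downstream cells $j$ of the ratios $s_j(\sum_k w_j^{(k)} x_j^{(k)})/\sum_k\sum_h R_{hj}^{(k)}\alpha_h^{(k)}d_h^{(k)}(x_h^{(k)})$, capped at $1$. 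This quotient is Lipschitz away from states where the denominator vanishes, but one must argue, as in \cite{Stab_D}, that the product $\gamma_i^F(t,x)\cdot\alpha_i^{(k)}d_i^{(k)}(x_i^{(k)})$ in \eqref{eq:fifo} extends to a globally Lipschitz function on $\mc X$: wherever the denominator is zero, every numerator term $\alpha_h^{(k)} d_h^{(k)}(x_h^{(k)})$ with $R_{hj}^{(k)}>0$ also vanishes, so the product is identically zero and can be continuously extended. The minimum-of-Lipschitz-functions and truncation-at-$1$ operations preserve Lipschitz continuity, giving a global Lipschitz bound independent of $t$.

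Next, I would establish forward invariance of $\mc X=\R_+^{\mc E\times\mc K}$. By Assumption~\ref{ass:1}, $d_i^{(k)}(0)=0$, so whenever $x_i^{(k)}=0$ the outflow $z_i^{(k)}(x)$ vanishes while $\lambda_i^{(k)}(t)$ and every inflow $f_{ji}^{(k)}(x)$ remain non-negative; hence $\dot x_i^{(k)}\ge 0$ at the boundary face $\{x_i^{(k)}=0\}$, and Nagumo's criterion secures positive invariance. Combined with the Lipschitz property, this yields a unique local solution starting from any $\bar x\in\mc X$. Global existence then follows from an a priori linear growth bound: since demand functions are concave with $d_i^{(k)}(0)=0$ they are sublinear, so $\|z^{(k)}(x)\|\le C(1+\|x\|)$, and the Gr\"onwall inequality applied to $\|x(t)\|\le\|\bar x\|+\int_0^t(\|\lambda(\tau)\|+\|f^{(k)}(x)\|+\|z^{(k)}(x)\|)\,\d\tau$ prevents finite-time blow-up, so the solution extends to $[0,\infty)$ and defines the flow $\varphi$.

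The main obstacle is the Lipschitz analysis of $\gamma_i^F$ near states where the aggregate demand feeding into some downstream cell collapses to zero: the quotient defining $\gamma_i^F$ is genuinely singular there. The key technical step is the cancellation argument, already implicit in \cite{Stab_D}, showing that the singularities of $\gamma_i^F$ are removable once one looks at the actual flow $\gamma_i^F\cdot\alpha_i^{(k)} d_i^{(k)}(x_i^{(k)})$ rather than $\gamma_i^F$ in isolation. Since the controls $\alpha_i^{(k)}(t)\in[0,1]$ only scale demands and never introduce new singularities, every estimate in the uncontrolled proof transfers verbatim, which justifies the claim that the proof of \cite{Stab_D} extends to the controlled setting.
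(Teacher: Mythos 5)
Your proposal is correct and follows essentially the same route as the paper: the paper's own proof is a two-line remark that well-posedness reduces (via the argument of the cited reference for the uncontrolled case) to forward invariance of the non-negative orthant, which holds because $d_i^{(k)}(0)=0$, and that the multiplicative controls $\alpha_i^{(k)}\in[0,1]$ do not affect this. You supply the supporting details the paper leaves implicit --- the Lipschitz/Carath\'eodory existence argument, the removable singularity of $\gamma_i^F$, Nagumo's criterion, and the Gr\"onwall bound --- but the underlying argument is the same.
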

\begin{proof}
	Notice that this is always true if the nonnegative orthant is invariant. A sufficient condition for this invariance is the assumption made on $d_i^{(k)}(x_i^{(k)})$, which is $d_i^{(k)}(x_i^{(k)}) = 0$ whenever $x_i^{(k)} = 0$.
\end{proof}

To illustrate how allowing for control can improve the performance of the C-DMFN, we extend Example~\ref{ex:dcartrucks}.
\begin{contexample}
	Consider a directed multigraph $\mc{G} = (\mc{V},\mc{E})$ with topology as in Figure~\ref{fig:grafo_esempio2}. The cells $\mc R =\{
    1\} $ are onramps, while cells $\mc S = \{7,9\}$ are offramps.
	\begin{figure}
		\centering
		\includegraphics[width=0.5\linewidth]{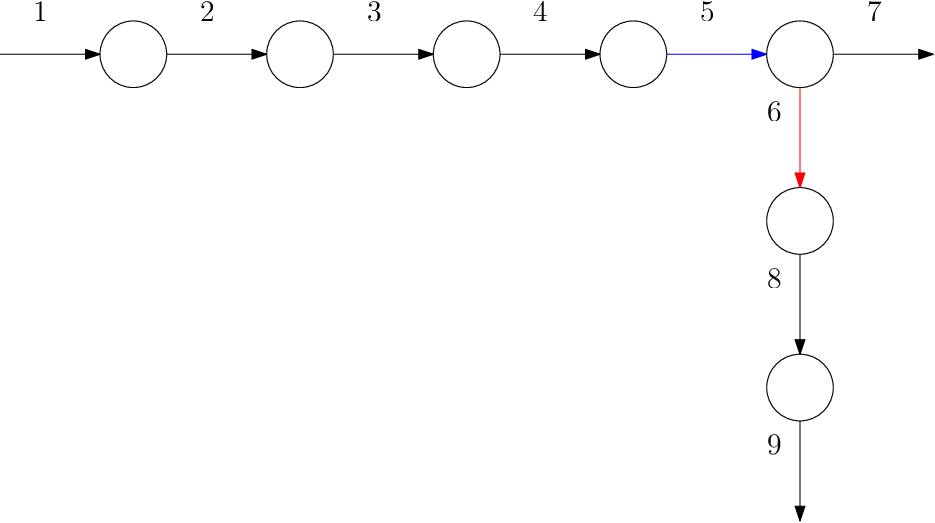}
		\caption{Network with two branches: cell $5$ (in blue) and $6$ (in red) have a reduced capacity by $\beta_5 = 0.5$ and $\beta_6 = 0.25$, respectively. Cells without a tail node are onramps, and cells without a head node are offramps.}
		\label{fig:grafo_esempio2}
	\end{figure}
	Let us consider cars and trucks as commodities that interact with each other by sharing the supply function of receiving cells. Each cell has a length $L = 0.5$ miles, while cars and trucks have the lengths of $0.0028$ and $0.0075$ miles, respectively. \iffalse Their respective demand functions are
	\ben
	d_i^{(a)}(x_i^{(car)}) = \alpha_i^{(car)}\frac{v_{ff}^{(car)}}{L}x_i^{(car)}, \quad d_i^{(truck)}(x_i^{(truck)}) = \alpha_i^{(truck)}\frac{v_{ff}^{(truck)}}{L}x_i^{(truck)}, \quad \forall i \in \mc{E}\;.
	\een
	where $v_{ff}^{(car)} = 60$ mph and $v_{ff}^{(truck)} = 40$ mph.
	Moreover, every supply function is defined as
	\ben
	s_i\left(\sum\limits_{k \in \mc{K}}l^{(k)}x_i^{(k)}\right) = \max \left(0,\frac{c_i}{\tilde{l}} (l_i - l^{(car)}x_i^{(car)} - l^{(truck)}x_i^{(truck)})\right)
	\een
	\fi
	
	Recall the demand and supply functions we previously introduced. In particular, let us set $v_{ff}^{(car)} = 60 $ mph and $v_{ff}^{(truck)} = 40 $ mph. Hence, the commodity-specific demand functions read
    $$
    d_i^{(car)}(x_i^{(car)})= 120x_i^{(car)}\,, \qquad d_i^{(truck)}(x_i^{(car)}) = 80x_i^{(car)}\,, \qquad \forall i \in \mc{E}\;.
    $$
    As far as the supply function is concerned, we set the wave speed at $w_i = 9 $ mph for every $i$ in $\mc{E}$. Moreover, the average vehicle length $\tilde{l}$ is computed assuming that $90\%$ of the vehicles in the network are cars. Then, we assume that two cells have a reduced space capacity, i.e., some construction work is being carried out on these cells. In particular, for cells $5$ and $6$, the coefficients in~\eqref{eq:supply} are set to $\beta_5 = 0.5$ and $\beta_6 = 0.25$, respectively. Lastly, the two commodities differ in their turning ratios at the diverge node. In particular, trucks are not allowed to exit the network through cell $7$, i.e., $R^\text{(truck)}_{5,7} = 0$. On the other hand, cars at the diverge junction are equally split, i.e., $R^{(car)}_{5,6} = R^{(car)}_{5,7} = 0.5$.
	
	The simulation is conducted over 400 time steps of size $h = 5 $ seconds, yielding a time horizon of $2000$ seconds. Furthermore, we assume an exogenous inflow given by $\lambda_1^{(car)}(t) = 1620$ veh/h and $\lambda_1^{(truck)}(t) = 180$ veh/h, for any $t \leq 250$. After that, the inflow increase to $\lambda_1^{(car)}(t) = 2700$ veh/h and $\lambda_1^{(truck)}(t) = 300$ veh/h, for any $t \in [250,750]$ and identically zero thereafter. Moreover, at time $t = 0$, we assume the network to be empty, i.e. $x_i^{(car)} = x_i^{(truck)} = 0$ for every $i$ in $\mc{E}$. 

    Once the setting is clear, we shall then use ramp meters and variable speed limits for the first $1250$ seconds to attempt to reduce congestion phenomena, focusing mainly on cells before the branch. In particular, we shall set the control parameter of cars to 
\[
\alpha^{(car)}(t) =
\begin{bmatrix}
1 & 0.94 & 0.78 & 0.6 & 1 & 1 & 1 & 0.99 & 0.99 & 1
\end{bmatrix}, \quad 0 \le t \le 625
\]
\[
\alpha^{(car)}(t) =
\begin{bmatrix}
0.83 & 0.63 & 0.50 & 0.30 & 1 & 1 & 1 & 1 & 1 & 1
\end{bmatrix}, \quad 625 < t \le 1250\;.
\]
On the other hand, we shall also control trucks by assigning the following control parameters
\ben
\ba{rcl}
\alpha^{(truck)}(t) & = & \left[0.18 \;\; 0.18 \;\; 0.19 \;\; 0.20 \;\; 0.41 \;\; 1 \;\; 1 \;\; 0.99 \;\; 0.99 \;\; 1\right]\;, \;\;  0 \leq t \leq 625 \\
\ea 
\een
\ben
\ba{rcl}
\alpha^{(truck)}(t) & = & \left[ 
0.58 \;\; 0.37 \;\; 0.14 \;\; 0.03 \;\; 0.25 \;\; 1 \;\; 1 \;\; 1 \;\; 1 \;\; 1
\right]\;, \;\; 625 < t \le 1250\;.
\ea
\een
After $1250$ seconds we shall stop any control action, i.e. $\alpha_i^{(car)} = \alpha_i^{(truck)} = 1$ for every $i$ in $\mc{E}$. We show the results in Figures \ref{fig:unc} and \ref{fig:unopt_cont}, where it is possible to notice that in both scenarios a congestion starts in cell $4$ and it is propagated back to previous cells. However, lowering vehicle inflow by ramp metering on the first cell and reducing the speed limits of cells before the branch, leads to a reduction in the congestion levels, as shown in Table \ref{tab:mass}, where it is seen that the total traffic volume lowers by $5\%$ with respect to the uncontrolled dynamics. Moreover, the traffic volume over time can be seen in Figure~\ref{fig:unop_contr_mass} where this heuristic control manages to achieve a lower value until $t = 1400 $ seconds.
\begin{table}
\centering
\begin{tabular}{lcc}
Total Traffic Volume & Uncontrolled & Heuristic \\ \hline
Onramps   & 6693   & 8104   \\ 
Cells   & 36310   & 31987  \\ 
Offramps & 1219 &  1527\\ \hline
Total & 44222 & 41618 \\ \hline
\end{tabular}
\label{tab:mass}
\caption{Total traffic volume divided by cell's type comparing uncontrolled and heuristic dynamics.}
\end{table}
	\begin{figure}
		\centering
		\begin{minipage}{0.45\textwidth}
			\centering
			\includegraphics[width=\linewidth]{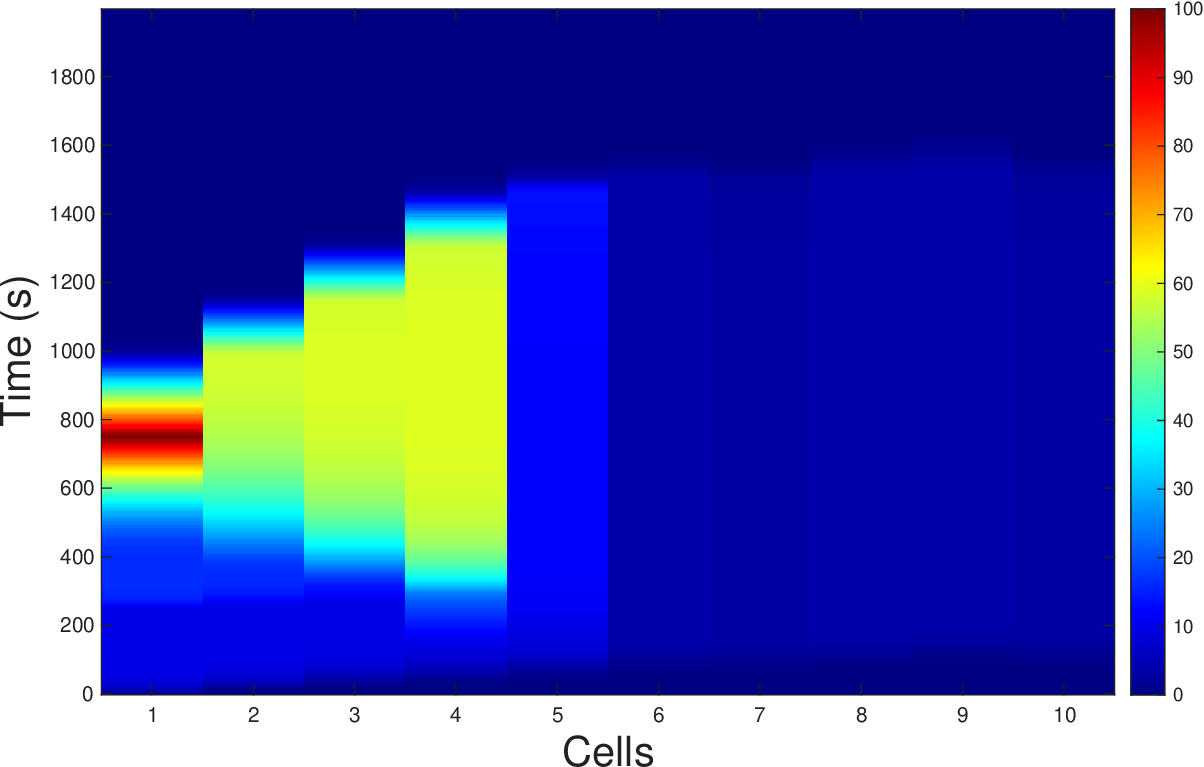}
			\caption{Mass plot of uncontrolled dynamics}
			\label{fig:unc}
		\end{minipage}\hfill
		\begin{minipage}{0.45\textwidth}
			\centering
			\includegraphics[width=\linewidth]{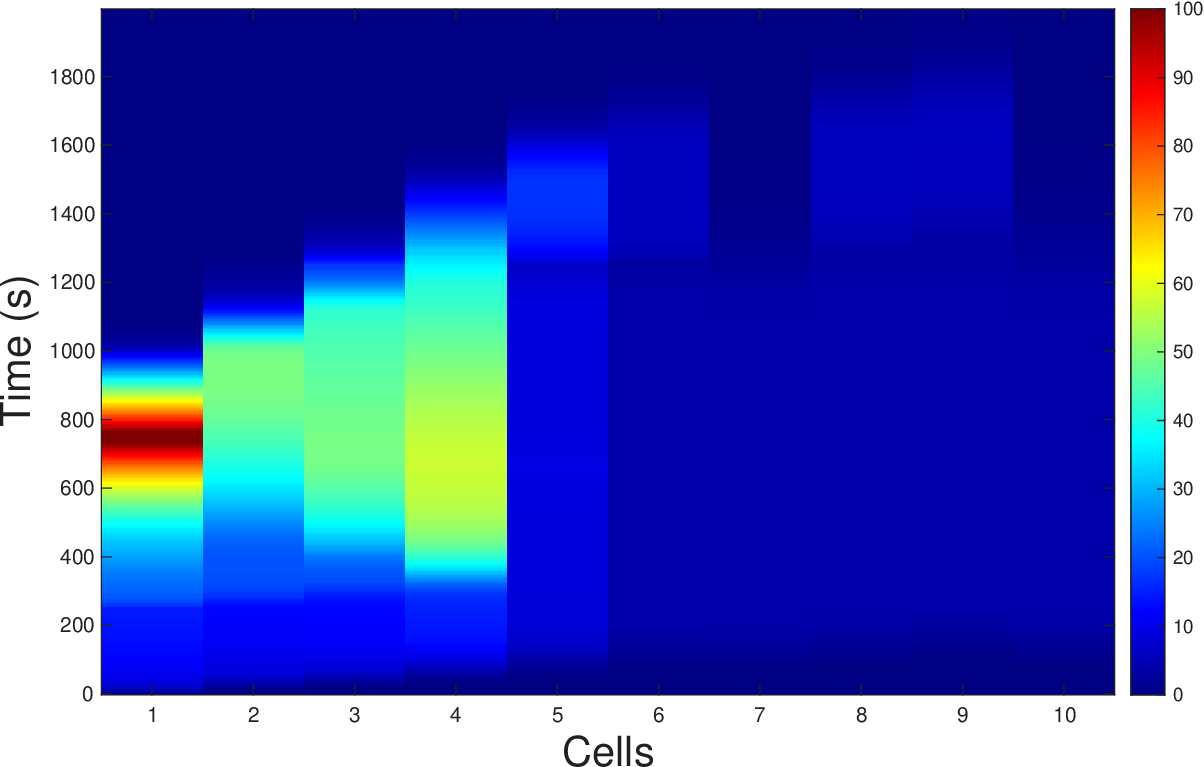}
			\caption{Mass plot with heuristic control}
			\label{fig:unopt_cont}
		\end{minipage}
	\end{figure}
	\begin{figure}
	    \centering
	    \includegraphics[width=0.5\linewidth]{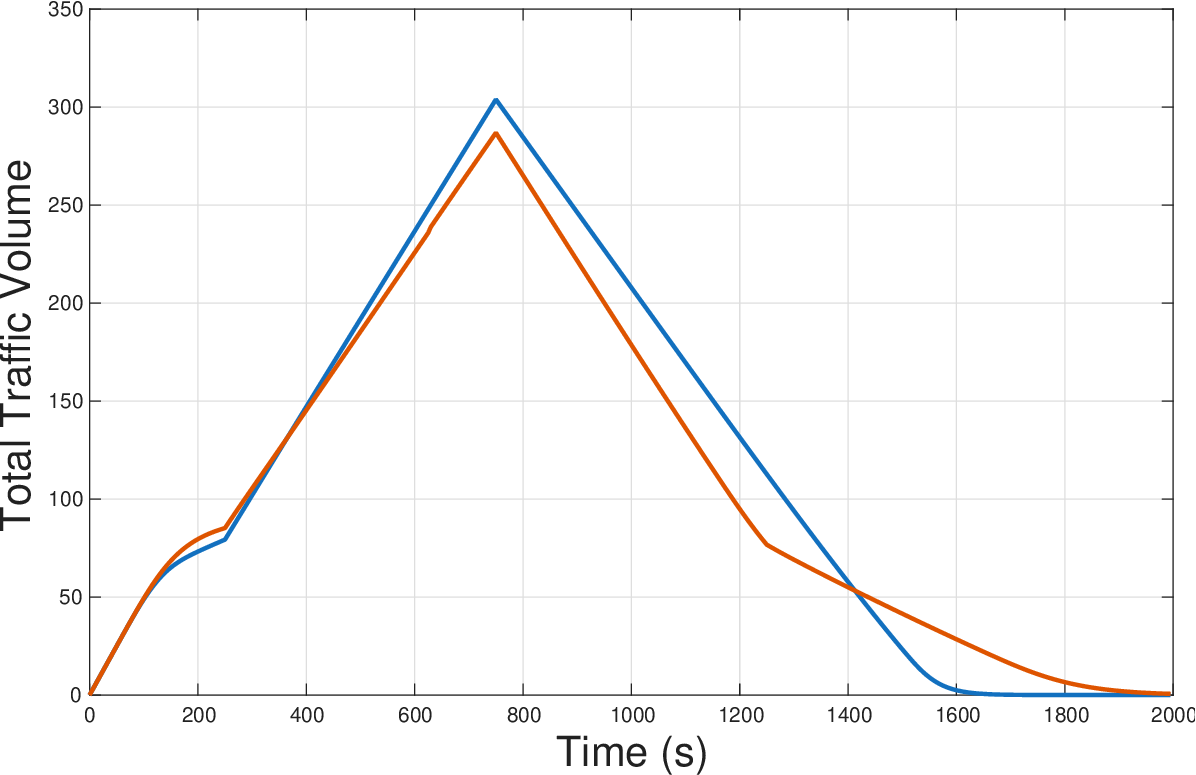}
	    \caption{Total traffic volume over time: in blue uncontrolled dynamics and in red heuristic control}
	    \label{fig:unop_contr_mass}
	\end{figure}

\end{contexample}

Once the model and its dynamics have been established, a natural question is whether it can be leveraged for traffic flow optimization via optimal control techniques. In particular, we shall consider the Freeway Network Control (FNC) problem, i.e., when the control is achieved through variable speed limits and ramp metering.

\subsection{Optimal Multi-Commodity Freeway Network Control Problem}
The FNC problem treats routing as an exogenous input and focuses solely on optimizing the control parameters $\alpha_i^{(k)}$ and it can be formulated as an open-loop optimal control problem, where the objective is to minimize the time-integral over cost $\varphi(x,z)$ which is a function of the traffic volume and the cell outflow, over a fixed time interval $\left[0,T\right]$, with $T$ denoting a finite time horizon.

\begin{assumption}\label{ass:cost}
	The cost function $\varphi(x,z)$ is convex and differentiable in $(x,z)$ and such that
	\ben
	\frac{\partial \varphi}{\partial x}(x,z) \geq 0\,, \quad \frac{\partial \varphi}{\partial z}(x,z) \leq 0\,, \quad
	\varphi(0,0) = 0\;,
	\een
	i.e., non-decreasing in $x$, non-increasing in $z$, and equal to zero in $(x,z) = (0,0)$. 
\end{assumption}
\begin{example}
	%We shall now introduce a few relevant cost functions, whose meaning has to be assumed after taking the integral between $0$ and $T$.
	Examples of common cost functions that satisfy Assumption~\ref{ass:cost} are:
    
    \begin{enumerate}
		\item[(i)] Total Travel Time (TTT):
		\ben
		\varphi(x,z) = \sum\limits_{i \in \mc{E}}{\varphi_i(x_i,z_i)} = \sum\limits_{i \in \mc{E}}{\sum\limits_{k \in \mc{K}}{x_i^{(k)}}} \;.
		\een
		where $\int_0^T{\varphi(x,z)} \d t$ then has the interpretation of time spent by all vehicles on every cell during the time span $\left[0,T\right]$;
		\item[(ii)] Total Travel Distance (TTD):
		\ben
		\varphi(x,z) = \sum\limits_{i \in \mc{E}}{\varphi(x_i,z_i)} = \sum\limits_{i \in \mc{E}}{-l_i\sum\limits_{k \in \mc{K}}{z_i^{(k)}}}\;,
		\een
		where $l_i$ denotes the length of cell $i$ in $\mc{E}$.
		Then, $\int_0^T{\varphi(x,z)} \d t$ can be interpreted as the distance traveled by all drivers in the interval $\left[0,T\right]$.
	\end{enumerate}
\end{example}

With the introduction of a cost function and hence a control objective, it is now possible to formally state the optimal control problem.

\begin{problem}[Continuous time MC-FNC]\label{problem1}
	\be\label{eq:FNC}
	\ba{rcll}
	& \underset{\alpha(t)}{\emph{minimize}} & \displaystyle\int_0^T{\varphi(x(t),z(t)) \d t} & \quad \\[20pt]
	& \emph{subject to} &  \eqref{eq:massCons}, \eqref{eq:zik}, \eqref{eq:fifo}, \eqref{eq:4.5} & \quad \\
	\ea
	\ee
\end{problem}

%\begin{definition}
	%Given a C-DMFN, initial conditions $x_i^{(k)}(0)$, a finite time horizon T, and a cost function $\varphi(x,z)$, the Multi-Commodity Freeway Network Control (MC-FNC) problem can be formulated as
	%\be\label{eq:FNC}
	%\underset{\underset{\eqref{eq:massCons},\eqref{eq:zik},\eqref{eq:fifo},\eqref{eq:4.5}}{\alpha(t)}}{\min} \int_0^T{\varphi(x(t),z(t)) \d t}
	%\ee
%\end{definition}

Notice that the FIFO allocation rule \eqref{eq:fifo} and \eqref{eq:4.5} make the optimization problem non-convex, as observed in \cite{Schmitt2018}.

\section{A Tight Convex Relaxation of the MC-FNC problem}
 In order to tackle the non-convexity of the problem, we introduce a relaxation to make it convex. The main idea is to change the optimization variables from $\alpha$ to the traffic volume variables $\bar x$ in $\R_+^{\mc{E}\times\mc{K}}$, and flow variables $\bar z$ in $\R_+^{\mc{A}\times\mc{K}}$ and $\bar f $ in $\R_+^{\mc{A}\times\mc{K}}$. Moreover, these variables need to satisfy the supply and demand constraints, which in turn stem from the fundamental diagram of traffic flows. Hence, our newly introduced variables have to satisfy:
\begin{enumerate}
	\item[(i)] demand constraints
	\be\label{eq:dmnd}
		\bar{z}_i^{(k)} \leq d_i^{(k)}(\bar{x}_i^{(k)}) \, , \qquad \forall i \in \mc{E}\,, \quad \forall k \in \mc{K}\; ;
	\ee
	\item[(ii)]  supply constraints
	\be\label{eq:supp}
		\sum\limits_{k\in\mc{K}}{\sum\limits_{j\in\mc{E}}{\bar{f}_{ji}^{(k)}(x)}} \leq s_i(\sum\limits_{k\in\mc{K}}{w_i^{(k)}\bar{x}_i^{(k)}})\,, \qquad \forall i \in \mc{E}\;.
	\ee
\end{enumerate}

In order to formulate the relaxation of the MC-FNC problem, the cell-to-cell flow $\bar f_{ij}^{k}$ must be equal to the product between the turning ratio $R_{ij}^{(k)}$ and the cell outflow $\bar z_i^{(k)}$. Formally,
\be\label{eq:frz}
	\bar f_{ij}^{k} = R_{ij}^{(k)}\bar z_i^{(k)} \,, \qquad \forall (i,j) \in \mc{A} \,, \quad \forall k \in \mc{K}\;.
\ee
With the newly introduced variables and constraints, the optimal control problem reads as follows:
\begin{problem}[Relaxation of the continuous time MC-FNC]\label{problem2}
\be\label{eq:FNC_r}
	\ba{rcll}
	 & \underset{\bar x(t),\bar z(t),\bar f(t)}{\emph{minimize}}  & \displaystyle\int_0^T{\varphi(\bar x(t),\bar z(t)) \d t} \quad & \\[20pt]
	& \emph{subject to} & \dot{\bar{x}}^{(k)}(t) = \lambda^{(k)}(t)  + \bar{f}^{(k)}(x(t)) - \bar{z}^{(k)}(x(t) ) \,,  & \\[10pt]
	& & \bar{z}_i^{(k)}(t)  \leq d_i^{(k)}(\bar{x}_i^{(k)}(t) )\,,   & \\[10pt]
	& & \sum\limits_{k\in\mc{K}}{\sum\limits_{j\in\mc{E}}{\bar{f}_{ji}^{(k)}(x(t) )}} \leq s_i(\sum\limits_{k\in\mc{K}}{w_i^{(k)}\bar{x}_i^{(k)}(t) })\,,\\[10pt]
	& & \bar f_{ij}^{k}(x(t)) = R_{ij}^{(k)}\bar z_i^{(k)}(x(t) )\,, & \\[10pt]
	& & x^{(k)}(0) = x^{(k),0} \,, &\\[10pt]
	& & \forall i \in \mc{E},\quad \forall k \in \mc{K},\quad 
 \forall (i,j) \in \mc{A}\;.
	\ea
\ee
\end{problem}

The following simple result ensures that  Problem \ref{problem2} is indeed a convex relaxation of Problem \ref{problem1}, i.e., that Problem \ref{problem2} is a convex optimal control problem and that every feasible solution of Problem \ref{problem1} is also feasible for Problem \ref{problem2}. 

\begin{proposition}
   Problem \ref{problem2} is a convex relaxation of Problem \ref{problem1}. 
%	The optimal control problem \eqref{eq:FNC_r}  is a relaxation
%	of the MC-FNC problem \eqref{eq:FNC} and a convex optimal control problem.
\end{proposition}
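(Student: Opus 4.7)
My plan is to prove the statement in two steps, matching the two requirements of a convex relaxation: first, that Problem~\ref{problem2} is convex, and second, that every feasible trajectory of Problem~\ref{problem1} lifts to a feasible tuple for Problem~\ref{problem2} with the same cost.

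For convexity I would check each constraint in turn. The objective $\int_0^T \varphi(\bar x(t),\bar z(t))\,\d t$ is convex in $(\bar x,\bar z)$ because $\varphi$ is convex by Assumption~\ref{ass:cost} and the integral of a convex function in a convex argument is convex. The state equation, the routing identity $\bar f_{ij}^{(k)} = R_{ij}^{(k)} \bar z_i^{(k)}$, and the initial condition are affine in $(\bar x,\bar z,\bar f)$, hence define convex sets. The demand constraint $\bar z_i^{(k)} \leq d_i^{(k)}(\bar x_i^{(k)})$ defines the hypograph of the concave function $d_i^{(k)}$ from Assumption~\ref{ass:1}, which is convex. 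Finally, for the supply constraint, the left-hand side is linear in $\bar f$, while $s_i\bigl(\sum_k w_i^{(k)}\bar x_i^{(k)}\bigr)$ is the composition of the concave function $s_i$ with an affine map in $\bar x$, hence concave; a linear-$\leq$-concave inequality cuts out a convex set. Intersecting convex sets preserves convexity, so Problem~\ref{problem2} is a convex optimal control problem.

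For the feasibility embedding, I would take an arbitrary control $\alpha$ admissible for Problem~\ref{problem1} with induced trajectory $x(t)$, outflows $z$ from \eqref{eq:zik}, and flows $f$ from \eqref{eq:fifo}, and simply set $\bar x := x$, $\bar z := z$, $\bar f := f$. The dynamic constraint of Problem~\ref{problem2} then coincides with \eqref{compact}, and \eqref{eq:fifo} directly gives $\bar f_{ij}^{(k)} = \gamma_i^F R_{ij}^{(k)} \alpha_i^{(k)} d_i^{(k)}(x_i^{(k)}) = R_{ij}^{(k)} \bar z_i^{(k)}$ for $i\notin\mc S$, with the offramp case handled by the definition \eqref{eq:zik}. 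The demand constraint is immediate, since $\bar z_i^{(k)} = \gamma_i^F \alpha_i^{(k)} d_i^{(k)}(\bar x_i^{(k)}) \leq d_i^{(k)}(\bar x_i^{(k)})$ because $\gamma_i^F,\alpha_i^{(k)}\in[0,1]$. The supply constraint at each cell is in turn enforced by the defining bound \eqref{eq:4.5} on the FIFO coefficients of the upstream cells. Because $(\bar x,\bar z)=(x,z)$, the costs of the two problems coincide on this lifted trajectory.

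The step that deserves the most care is the supply inequality in the embedding argument: each upstream cell $j$ of $i$ carries its own FIFO coefficient $\gamma_j^F$, determined by the constraints at \emph{all} of its downstream cells, not only at $i$. I would therefore expand \eqref{eq:4.5} to confirm that in merging topologies the aggregate inflow $\sum_{k,j}f_{ji}^{(k)}$ never exceeds $s_i\bigl(\sum_k w_i^{(k)}x_i^{(k)}\bigr)$, exploiting the fact that each $\gamma_j^F$ caps the unscaled total demand into every one of $j$'s downstream cells by the corresponding supply. Once this observation is in place, the rest of the verification reduces to direct substitution.
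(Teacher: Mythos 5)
Your proposal is correct and follows essentially the same two-step structure as the paper's proof: verifying that any controlled trajectory of Problem~\ref{problem1} is feasible for Problem~\ref{problem2}, and checking convexity of the objective and of each constraint via Assumptions~\ref{ass:1} and~\ref{ass:cost}. You are in fact more careful than the paper on the one delicate point, the supply inequality at merge junctions, which the paper asserts without expanding \eqref{eq:4.5}; your observation that each upstream $\gamma_j^F$ already caps the \emph{total} (all-upstream) demand into every downstream cell is exactly what closes that step.
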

\begin{proof}
	First, we prove that every feasible solution of \eqref{eq:FNC} is a feasible solution of \eqref{eq:FNC_r}. In fact, for every choice of control parameters $\alpha$ in $[0,1]$, the dynamic of the system inevitably satisfies  \eqref{eq:dmnd}, \eqref{eq:supp}, and \eqref{eq:frz}, implying it is feasible for the relaxed problem.
	
	We are now left to prove the convexity of \eqref{eq:FNC_r} by noticing that this would mean that if both 
	$\left( \bar x^0(t), \allowbreak,\bar z^0(t),\bar f^0(t)\right)$ and $\left( \bar x^1(t),\bar z^1(t),\bar f^1(t)\right)$ satisfy the constraints, then for every $\beta$ in $\left[0,1\right]$, also $\left( \bar x^\beta(t),\bar z^\beta(t),\bar f^\beta(t)\right)$ does, where $\bar x^\beta = (1-\beta)\bar x^0 + \beta \bar x^1$, $\bar z^\beta = (1-\beta)\bar z^0 + \beta \bar z^1$ and $\bar f^\beta = (1-\beta)\bar f^0 + \beta \bar f^1$. This implies that
	\be\label{eq:cond}
	\int_0^T{\varphi(\bar x^\beta(t)) \d t} \leq (1-\beta)\int_0^T{\varphi(\bar x^0(t))\d t} + \beta\int_0^T{\varphi(\bar x^1	(t))\d t} \,.
	\ee
	Given the convex nature of the cost function $\varphi(x,z)$, inequality \eqref{eq:cond} is always satisfied. Moreover, the concavity of the demand and supply functions implies the convexity of the constraints $\eqref{eq:dmnd}$ and $\eqref{eq:supp}$. Furthermore, for every choice of the control parameters $\alpha^{(k)}$, the constraints \eqref{eq:zik} and \eqref{eq:frz} are linear equalities.
\end{proof}

It remains to establish the tightness of the above convex relaxation. Specifically, we will show that for every solution of the convex optimal control problem~\eqref{eq:FNC_r}, there exists a corresponding selection of control parameters $\alpha_i^{(k)}$ such that equations \eqref{eq:massCons},\eqref{eq:zik},\eqref{eq:fifo}, and \eqref{eq:4.5} are satisfied. To this end, consider a solution $(\bar x,\bar z,\bar f)$  and let the control parameters $\alpha$ be given by
\be\label{eq:alfa_sol}
\alpha_i^{(k)}(t) = \frac{\bar z_i^{(k)}(t)}{d_i^{(k)}(\bar x_i^{(k)}(t))}, \qquad \forall i \in \mc{E}, \quad \forall k \in \mc{K}\;,
\ee
with the convention that $\alpha_i^{(k)} = 1$ if $\bar z_i^{(k)} = d_i^{(k)}(\bar x_i^{(k)}) = 0$. 
\begin{theorem}
	Given a C-DMFN and initial traffic volumes $x^0$, then for any feasible solution $(\bar x(t),\bar z(t),\bar f(t))$ of the convex FNC \eqref{eq:FNC_r}, it is possible to set $\alpha_i^{(k)}(t)$ as defined in \eqref{eq:alfa_sol}. Then, the solution satisfies \eqref{eq:massCons},\eqref{eq:zik},\eqref{eq:fifo},\eqref{eq:4.5} and $\alpha^{(k)}$ is a solution of the original FNC problem \eqref{eq:FNC}.
\end{theorem}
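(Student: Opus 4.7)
The strategy is direct: substitute the proposed $\alpha_i^{(k)}$ into the original dynamics and verify each of \eqref{eq:massCons}, \eqref{eq:zik}, \eqref{eq:fifo}, \eqref{eq:4.5} in turn, using the four families of constraints of Problem~\ref{problem2}. The key algebraic identity that makes everything work is
\ben
\alpha_i^{(k)}(t)\, d_i^{(k)}(\bar x_i^{(k)}(t)) = \bar z_i^{(k)}(t),
\een
which holds by definition of $\alpha_i^{(k)}$ when $d_i^{(k)}(\bar x_i^{(k)})>0$, and which holds trivially under the stated convention when $d_i^{(k)}(\bar x_i^{(k)})=0$ (since then the demand constraint \eqref{eq:dmnd} forces $\bar z_i^{(k)}=0$).

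\textbf{First}, I would verify that $\alpha_i^{(k)}(t)\in[0,1]$ is a legitimate control value. Nonnegativity is immediate from $\bar z_i^{(k)}\ge 0$ and $d_i^{(k)}\ge 0$, while the upper bound is exactly the demand constraint \eqref{eq:dmnd} of the relaxation. \textbf{Second}, I would address the FIFO rule and in particular show that the maximum in the definition of $\gamma_i^F$ is attained at $\gamma_i^F = 1$. Plugging the identity above into the right-hand side of \eqref{eq:4.5} and using $\bar f_{hj}^{(k)} = R_{hj}^{(k)}\bar z_h^{(k)}$ from \eqref{eq:frz}, the constraint tested at $\gamma=1$ reduces, for each downstream cell $j$, to
\ben
\sum_{k\in\mc K}\sum_{h\in\mc E}\bar f_{hj}^{(k)} \;\le\; s_j\Bigl(\sum_{k\in\mc K} w_j^{(k)}\bar x_j^{(k)}\Bigr),
\een
which is precisely the supply constraint \eqref{eq:supp} of Problem~\ref{problem2}. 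Hence $\gamma=1$ is feasible, so $\gamma_i^F=1$.

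\textbf{Third}, with $\gamma_i^F = 1$ and the key identity, the FIFO definition \eqref{eq:fifo} gives $f_{ij}^{(k)} = R_{ij}^{(k)}\bar z_i^{(k)} = \bar f_{ij}^{(k)}$ for every adjacent pair $(i,j)\in\mc A$, and nullity outside $\mc A$ is inherited from \eqref{eq:zero}. Summing over $j$ and using $\sum_{j\in\mc E}R_{ij}^{(k)} = 1$ for $i\in\mc E\setminus\mc S$ (from Assumption~\ref{ass:2}) recovers the total outflow in \eqref{eq:zik}; for $i\in\mc S$ the offramp branch of \eqref{eq:zik} again coincides with $\bar z_i^{(k)}$ via the key identity. \textbf{Finally}, since the outflows and cell-to-cell flows thus reconstructed coincide with $\bar z$ and $\bar f$, substituting them into \eqref{eq:massCons} reproduces the dynamical equation of Problem~\ref{problem2} that $(\bar x,\bar z,\bar f)$ already satisfies by feasibility; the initial condition matches by construction. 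The cost integrals of the two problems agree by inspection, so $\alpha$ is feasible for Problem~\ref{problem1} and attains the same cost as the relaxed solution.

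\textbf{Expected main obstacle.} The only nontrivial point is step two: unpacking the somewhat opaque $\max$-over-$j$ definition of $\gamma_i^F$ in \eqref{eq:4.5} and checking that the supply constraint \eqref{eq:supp} of the relaxation is exactly the right quantity to certify $\gamma_i^F = 1$. Everything else is a mechanical substitution. One mild bookkeeping subtlety is handling the degenerate case $d_i^{(k)}(\bar x_i^{(k)})=0$ uniformly with the generic case; the convention $\alpha_i^{(k)}=1$ adopted in \eqref{eq:alfa_sol} is tailored precisely so that the key identity continues to hold there, and I would flag this explicitly at the start of the argument.
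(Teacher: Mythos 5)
Your proposal is correct and follows essentially the same route as the paper's proof: establish the identity $\alpha_i^{(k)}d_i^{(k)}(\bar x_i^{(k)})=\bar z_i^{(k)}$, use \eqref{eq:frz} and the supply constraint \eqref{eq:supp} to certify $\gamma_i^F=1$ in \eqref{eq:4.5}, and conclude that the reconstructed flows coincide with $(\bar z,\bar f)$. Your explicit checks that $\alpha_i^{(k)}\in[0,1]$ and that the degenerate case $d_i^{(k)}(\bar x_i^{(k)})=0$ is handled by the convention are welcome additions that the paper leaves implicit, but they do not change the argument.
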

\begin{proof}
	Let $(\bar x(t),\bar z(t),\bar f(t))$ be a feasible solution. Then, the demand constraints~\eqref{eq:dmnd} guarantee that the outflow is less than or equal to the demand, i.e.,  $\bar z_i^{(k)} \leq \alpha_i^{(k)}d_i^{(k)}(\bar x_i^{(k)})$. Hence, when selecting the control parameters as in \eqref{eq:alfa_sol}, the outflow of every cell corresponds exactly to the demand. Formally,
	\ben
	\bar z_i^{(k)} = \alpha_i^{(k)}d_i^{(k)}(\bar x_i^{(k)})\,, \qquad \forall i \in \mc{E}\; .
	\een
	Furthermore, it follows that
	\ben
	\bar f_{ij}^{(k)} = R_{ij}^{(k)}\bar z_i^{(k)} = R_{ij}^{(k)}\alpha_i^{(k)}d_i^{(k)}(\bar x_i^{(k)})\,,\qquad \forall (i,j) \in \mc{A}\;,
	\een
	which implies that for every cell $j$ in $\mc{E}$
	\ben
	\sum\limits_{k \in \mc{K}}{\sum\limits_{i \in \mc{E}}{R_{ij}^{(k)}\alpha_i^{(k)}d_i^{(k)}(\bar x_i^{(k)})}} = \sum\limits_{k \in \mc{K}}{\sum\limits_{(i,j)\in\mc{A}}{\bar f_{ij}^{(k)}}} \leq s_{j}\left( \sum\limits_{k \in \mc{K}}{w_j^{(k)}\bar x_j^{(k)}}\right) \; .
	\een
	Notice that the last inequality follows from the supply constraint \eqref{eq:supp} implying that there is no congestion, i.e., $\gamma_i^{F} = 1$ for every $i$ in $\mc{E}$. Hence, $\bar x = x, \bar z = z$, and $\bar f = f$. Moreover, the equations \eqref{eq:massCons},\eqref{eq:zik},\eqref{eq:fifo}, and \eqref{eq:4.5} are satisfied so that $\alpha^{(k)}$ is a  solution of the original FNC \eqref{eq:FNC}.
\end{proof}
\begin{remark}
	It is worth emphasizing that, even without the employment of ramp meters and variable speed limits, the relaxation \eqref{eq:FNC_r} of the MC-FNC is still convex. However, the employment of variable speed limits ensures the tightness of the convex relaxation.
\end{remark}

Before stating the discrete-time MC-FNC, we will make an additional assumption to ensure that a CFL-like condition is satisfied. To do so, for each cell $i$ we will make use of its length $L_i$ and its free-flow speed $v_{ff,i}^{(k)}$ for every commodity, i.e., the speed at which each commodity travels at low traffic volume conditions.
\begin{assumption}\label{ass:linearity}
        For every cell $i$ in $\mc{E}$, we have that:  
        \begin{enumerate}
            \item[(i)] the demand function of commodity $k$ in $\mc{K}$ is a linear function of the form
            $$
            d_i^{(k)}(x_i^{(k)}) = \frac{v_{ff,i}^{(k)}}{L_i}x_i^{(k)}\;;
            $$
            \item[(ii)] the supply function is an affine function of the weighted sum $\sum\limits_{k \in \mc{K}} {w_i^{(k)}x_i^{(k)}}$;
            \item[(iii)] the discretization step $h$ must satisfy the CFL condition:
	\ben
	h \leq \min\limits_{k \in \mc{K}} \frac{L_i}{v_{ff,i}^{(k)}} \; .
	\een
    \end{enumerate}
\end{assumption}
    Notice that point (iii) of Assumption $\ref{ass:linearity}$ must hold for every cell, so we can rewrite it compactly as
    $$
    h \leq \min\limits_{i \in \mc{E}, k \in \mc{K}} \frac{L_i}{v_{ff,i}^{(k)}} \; .
    $$
    We are now ready to introduce the MC-FNC problem in discrete time. 
\begin{problem}[Discrete time relaxation of the MC-FNC problem]
	\ben
	\ba{rcll}
		& \underset{\bar x(t),\bar z(t),\bar f(t)}{\emph{minimize}} & \sum\limits_{t=0}^N{}\sum\limits_{k\in\mc{K}}{}\sum\limits_{i\in\mc{E}}{\varphi_i^{(k)}(\bar x_i^{(k)})} & \\
		& \emph{subject to}   & \bar x_i^{(k)}(t+1) = \bar x_i^{(k)}(t) +  h(\lambda_i^{(k)}(t)  + \sum\limits_{j\in\mc{E}}{\bar f_{ji}^{(k)}}(t) - \bar z_i^{(k)}(t))\,,&\\
        & & 0 \leq \bar z_i^{(k)}(t) \leq d_i^{(k)}(\bar x_i^{(k)}(t)),\quad & \\
        & & \sum\limits_{k\in\mc{K}}{\sum\limits_{j\in\mc{E}}}{\bar f_{ji}^{(k)}}(t) \leq s_i(\sum\limits_{k\in\mc{K}}{w_i^{(k)}\bar x_i^{(k)}(t)}),& \\
		& & \bar f_{ij}^{(k)}(t) = R_{ij}^{(k)}(t)\bar z_i^{(k)}(t),&  \\
		& & \bar f_{ij}^{(k)}(t) \geq 0,&\\
        & & \bar x_i^{(k)}(0) = \bar x_i^{(k),0}, &\\[10pt]
	& & \forall i \in \mc{E},\quad \forall k \in \mc{K},\quad 
 \forall (i,j) \in \mc{A}\;.
	\ea
	\een
\end{problem}
In order to show the effectiveness of the control action we now present an example with a rather simple network topology.
\setcounter{example}{1}
\begin{contexample}
Let us once again continue our example on the network in Figure \ref{fig:grafo_esempio2}. In particular, recall the demand and supply functions, the turning ratios, the exogenous inflows and the time horizon. As a cost function, the sum of the traffic volume $\varphi(x) = \sum\nolimits_{k \in \mc{K}} \sum\nolimits_{i \in \mc{E}} x_i^{(k)}$ is chosen. 
The optimization is performed via the Matlab package CVX and the results are shown in Figures \ref{fig:CVX_nonopt} and \ref{fig:CVX_opt}, where it is possible to notice that with a proper combination of ramp metering and variable speed limits, the congestion is greatly reduced. In particular, in Table \ref{tab:mass_controlled} it is possible to notice that the optimal control achieves an improvement of $41.5\%$ with respect to the uncontrolled dynamics. As before, in Figure \ref{fig:contr_mass_over_time} we plot the total traffic volume over time where the optimal control dynamics always achieves a lower value than the uncontrolled one.

\begin{table}
\centering
\begin{tabular}{lcc}
Total Traffic Volume & Uncontrolled & Optimal Control \\ \hline
Onramps   & 6693   & 5123   \\ 
Cells   & 36310   & 20296  \\ 
Offramps & 1219 &  609\\ \hline
Total & 44222 & 26965\\ \hline
\end{tabular}
\label{tab:mass_controlled}
\caption{Total traffic volume divided by cell's type comparing uncontrolled and optimal control.}
\end{table}

\begin{figure}
	\centering
	\begin{minipage}{0.45\textwidth}
		\centering
		\includegraphics[width=\linewidth]{Pictures/density_plot_uncontrolled.eps}
		\caption{Mass plot of uncontrolled dynamics }
		\label{fig:CVX_nonopt}
	\end{minipage}\hfill
	\begin{minipage}{0.45\textwidth}
		\centering
		\includegraphics[width=\linewidth]{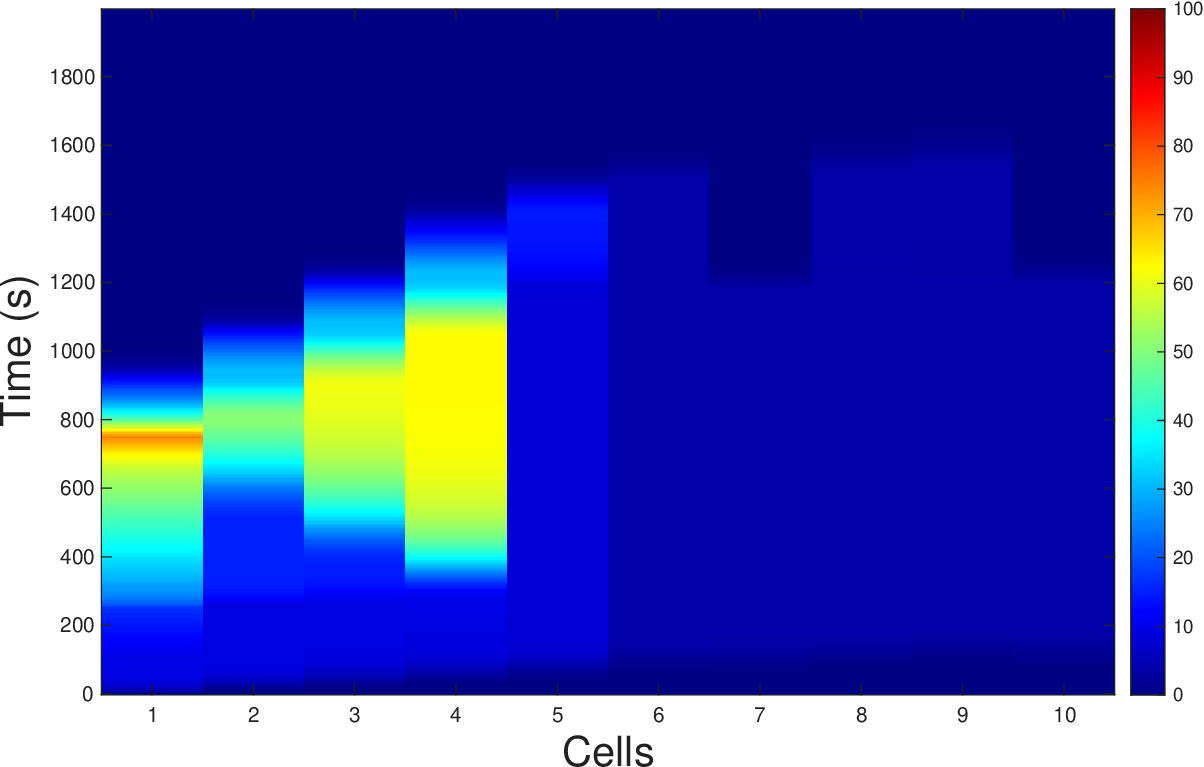}
		\caption{Mass plot of optimal control dynamics}
		\label{fig:CVX_opt}
	\end{minipage}
\end{figure}

\begin{figure}
    \centering
    \includegraphics[width=0.5\linewidth]{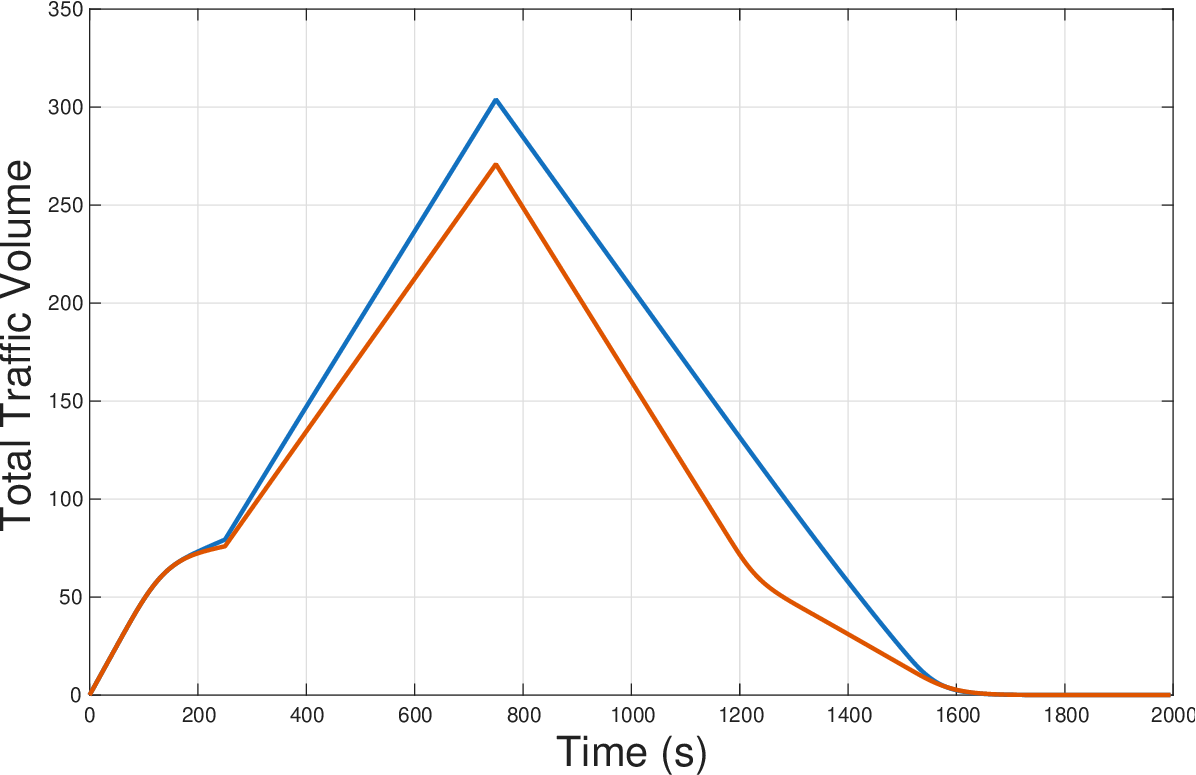}
    \caption{Total traffic volume over time: in blue uncontrolled dynamics and in red optimal control dynamics.}
    \label{fig:contr_mass_over_time}
\end{figure}
\end{contexample}

\section{Case Study: Caltrans Performance Measurement System (PeMS) Database }
To test the real effectiveness of the proposed optimal control scheme, a scenario using data from the \textit{Caltrans Performance Measurement System (PeMS)}\footnote{https://pems.dot.ca.gov/} database was conducted, focusing on a subset of roads around Los Angeles, as shown in Figure \ref{fig:grafo_pems}. Let us call the network in the figure $\tilde{\mc{G}} = (\tilde{\mc{V}},\tilde{\mc{E}})$ and we shall call every link $i$ in $\tilde{\mc{E}}$ a \textit{road}. Then, each road $i$ in $\tilde{\mc{E}}$ is split into cells $i_1, \dots, i_n$ of length 2 miles, with an ordinary junction between consecutive cells. To each cell, an onramp and an offramp are then associated, obtained by collapsing together all the actual ramps in that 2-mile section of the road. In order to be consistent with previous notation, we will identify $\mc{E}$ as the set of all cells, onramps, and offramps and $\mc{V}$ as the set of junctions between cells and between roads. In order to model our different commodities, we extracted sensor data regarding both cars and trucks separately. By using the measured data, the demand functions, supply functions, and routing matrices of each commodity were calibrated, as explained in more detail in~\ref{app:pems}. The initial conditions, $x^{(car),0}$ and $x^{(truck),0}$, are set by considering the flow measurement taken from the first sensor in each cell on February 8, 2012 at 06:00 AM, for cars and trucks separately.

\begin{figure}
	\centering
	\begin{minipage}{0.45\textwidth}
		\centering
		\includegraphics[width=1\linewidth]{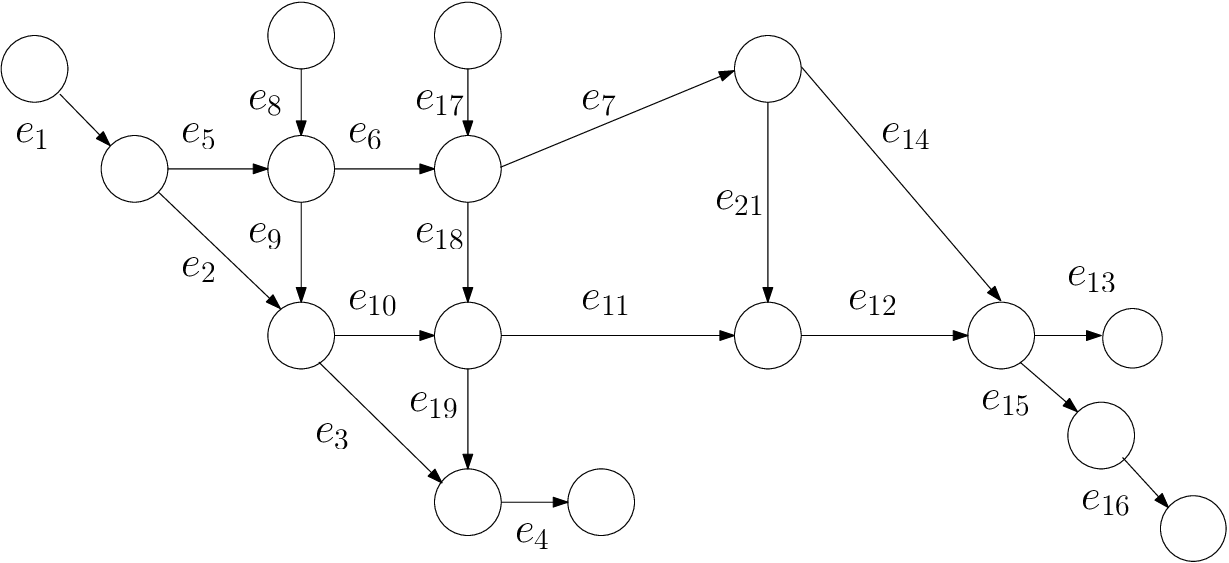}
	\end{minipage}\hfill
	\begin{minipage}{0.45\textwidth}
		\centering
		\includegraphics[width=1\linewidth]{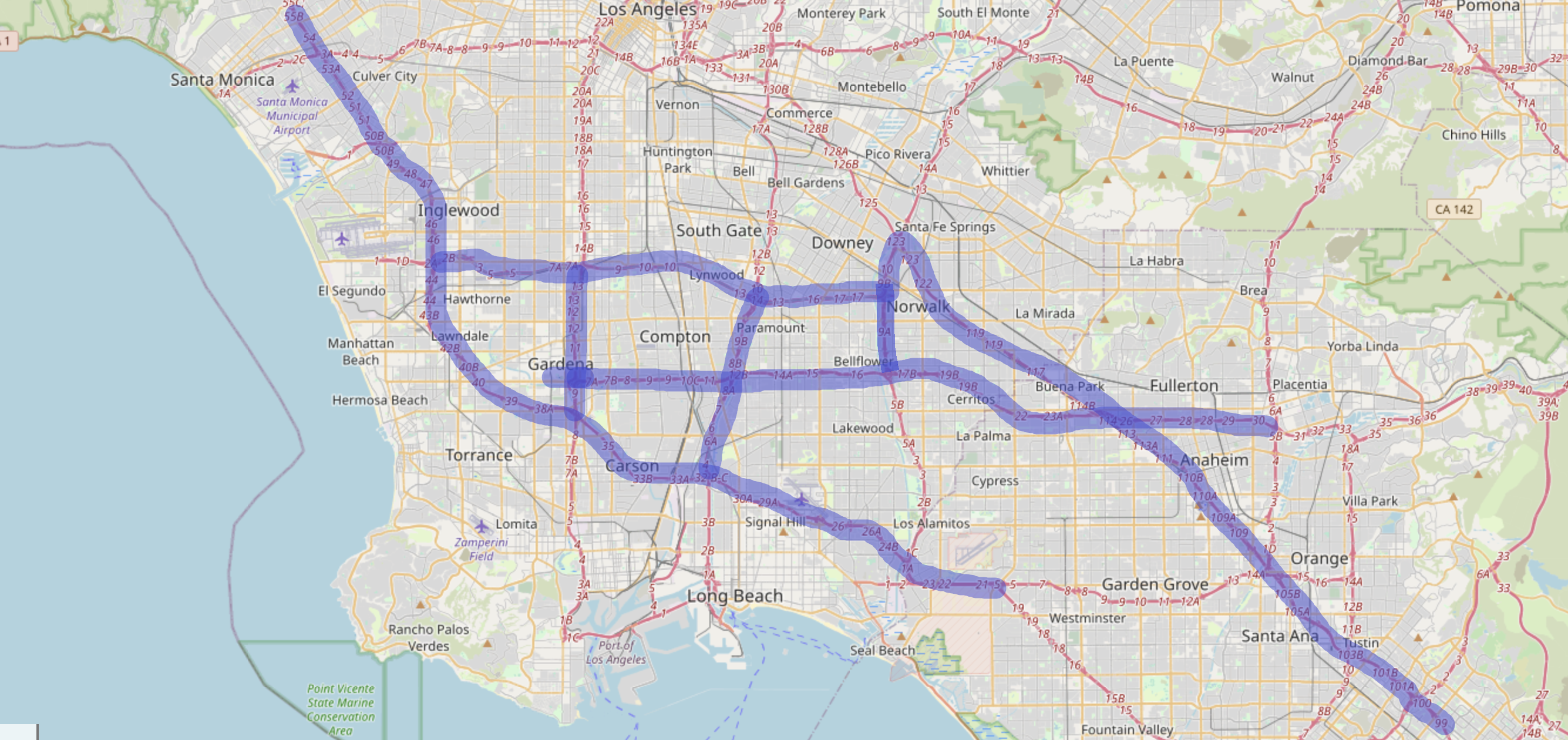}
	\end{minipage}
	\caption{On the left: graph representation of a part of the freeway network. Each link is then sectioned into 2-mile cells and also is equipped with an onramp and offramp, by collapsing together all the ramps in that section. On the right: A section of the Los Angeles map with the corresponding network highlighted.}
	\label{fig:grafo_pems}
\end{figure}

We simulate the system over a horizon $T = 4$ hours, choosing a discretization parameter $h = 72$ seconds. The exogenous inflow on each onramp $r$ in $\mc{R}$ is changed every hour to simulate different traffic conditions. In particular, for each cell $i$ in $\mc{E}$ we compute the inflow of its onramp by taking actual flow measurements on all onramps on that 2-mile section and summing them together. Moreover, we shall consider as cost functions the total travel time, i.e.,

 $$\varphi(x)_{TTT} =\sum\limits_{t=0}^{k}{} \sum\limits_{i \in \mc{E}} {} x_i^{(car)}(t) + x_i^{(truck)}(t) \;, $$

\subsection{Results of the MC-FNC Problem with PeMS Data}
In this section, we present our results by solving the MC-FNC problem to improve the total travel time with respect to the uncontrolled scenario. In particular, we shall first compute the optimal ramp metering and variable speed limits for both cars and trucks separately, to achieve the optimal control. Then, we shall assume that only cars are controlled, to understand if partial control can still improve the system's performance.

The cost function values are presented in Table \ref{tab:total_traffic_volume_optimal}, where it is clear that the optimal control achieves the best performance. In particular, it manages to improve the total travel time by $16.24\%$ with respect to the uncontrolled scenario. Moreover, applying variable speed limits and ramp metering only to cars managed to improve the system's performance. In particular, it improves the total travel time by $5.1\%$ with respect to the uncontrolled case. The total traffic volume over time is shown in Figures \ref{fig:pems_vol}, where it is possible to notice that the optimal control and the partial control of cars always manage to achieve a lower value with respect to the uncontrolled scenario. Moreover, in Figures \ref{fig:pems_speed} and \ref{fig:pems_speed_on}, it is noticeable that, by reducing the speed limits of vehicles in the mainline or applying ramp metering on the onramps, it is possible to increase the vehicle average speed.

\begin{table}
\centering
\begin{tabular}{lccc}
TTT  & Uncontrolled & Optimal Control & Partial Control Cars \\ \hline
Onramps   & 3007125   & 5819032 & 5892485\\ 
Cells   & 154629   & 180389 & 161441\\ 
Offramps & 7226348 &  2701341 & 3801783\\ \hline
Total & 10388103 & 8700763 & 9855709\\ \hline
\end{tabular}
\label{tab:total_traffic_volume_optimal}
\caption{Total traffic volume divided by cell's type comparing uncontrolled, optimal control, and partial control.}
\end{table}

\begin{figure}
	\centering
	\includegraphics[width=0.5\linewidth]{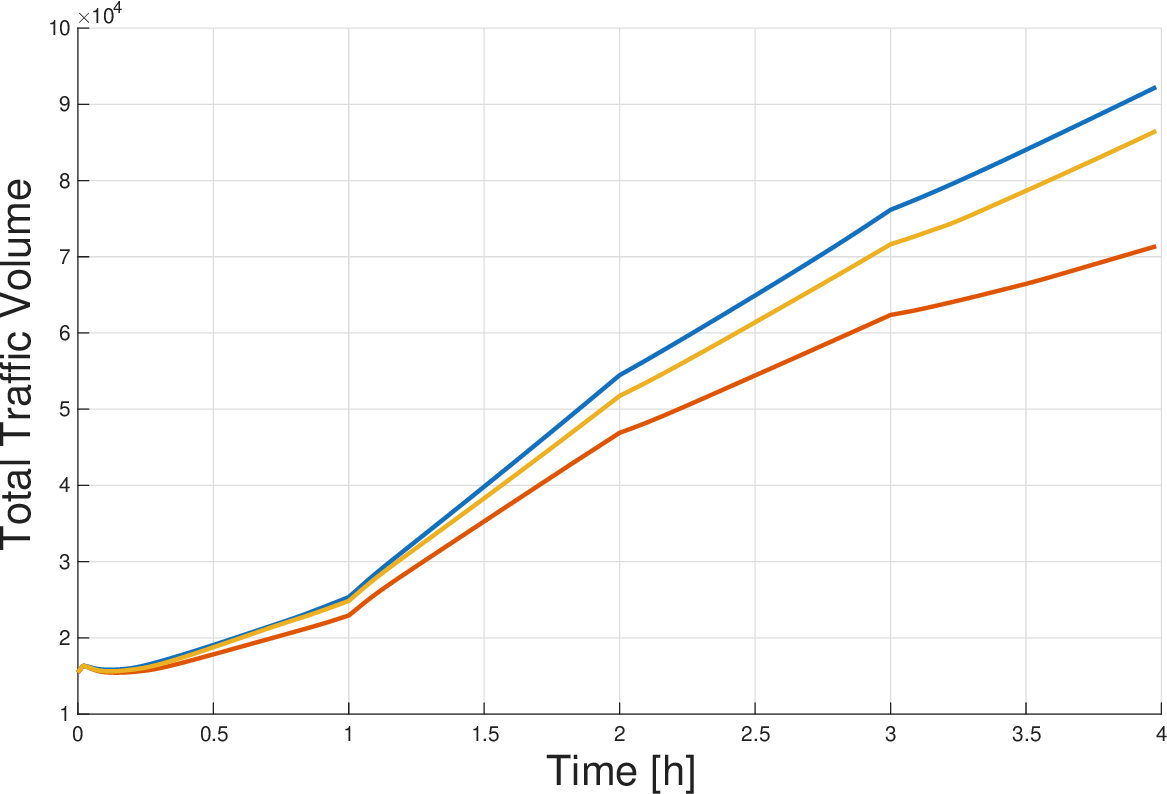}
	\caption{Total Traffic Volume comparison: in blue the uncontrolled dynamics, in red the optimal control dynamics, and in yellow the partial control dynamics.}
	\label{fig:pems_vol}
\end{figure}
\begin{figure}
\centering
\begin{minipage}{0.48\linewidth}
	\includegraphics[width=\linewidth]{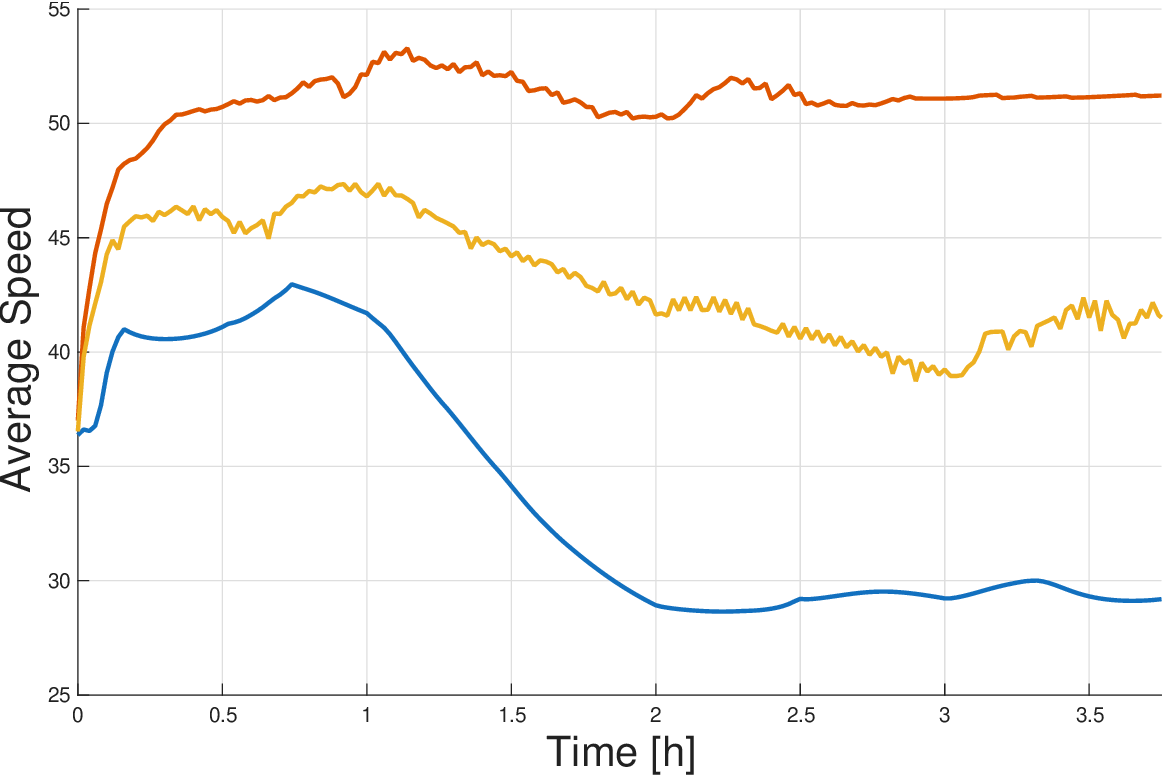}
\end{minipage}
\hfill
\begin{minipage}{0.48\linewidth}
	\includegraphics[width=\linewidth]{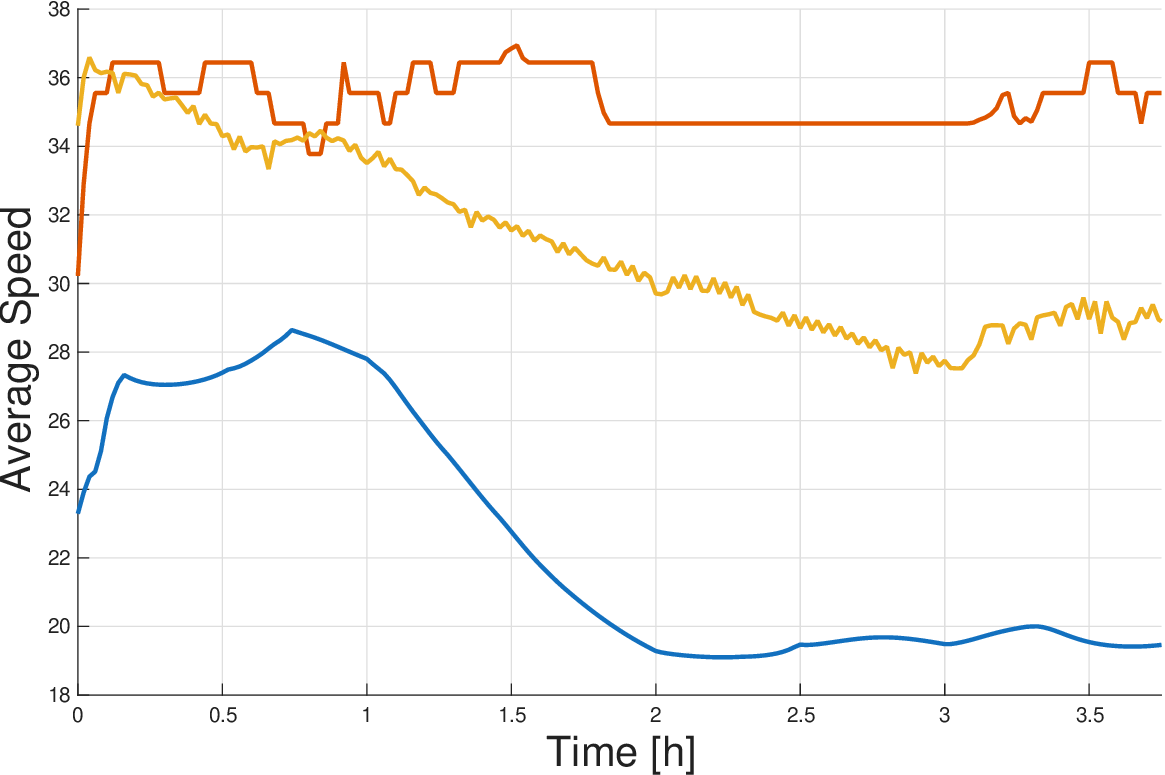}
\end{minipage}
\caption{Average Speed of cars (left) and trucks (right) in the mainline under different scenarios: in blue the uncontrolled dynamics, in red the optimal control one, and in yellow the partial control of cars.}
\label{fig:pems_speed}
\end{figure}
\begin{figure}
\centering
\begin{minipage}{0.48\linewidth}
	\includegraphics[width=\linewidth]{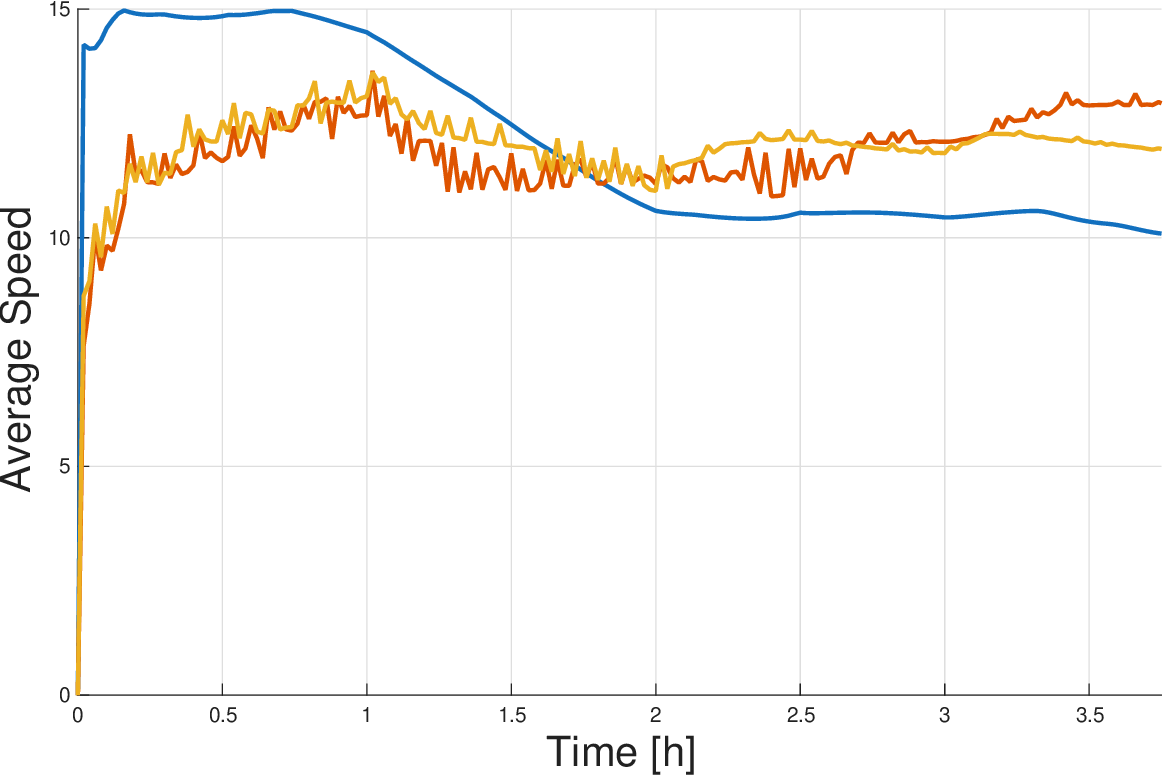}
\end{minipage}
\hfill
\begin{minipage}{0.48\linewidth}
	\includegraphics[width=\linewidth]{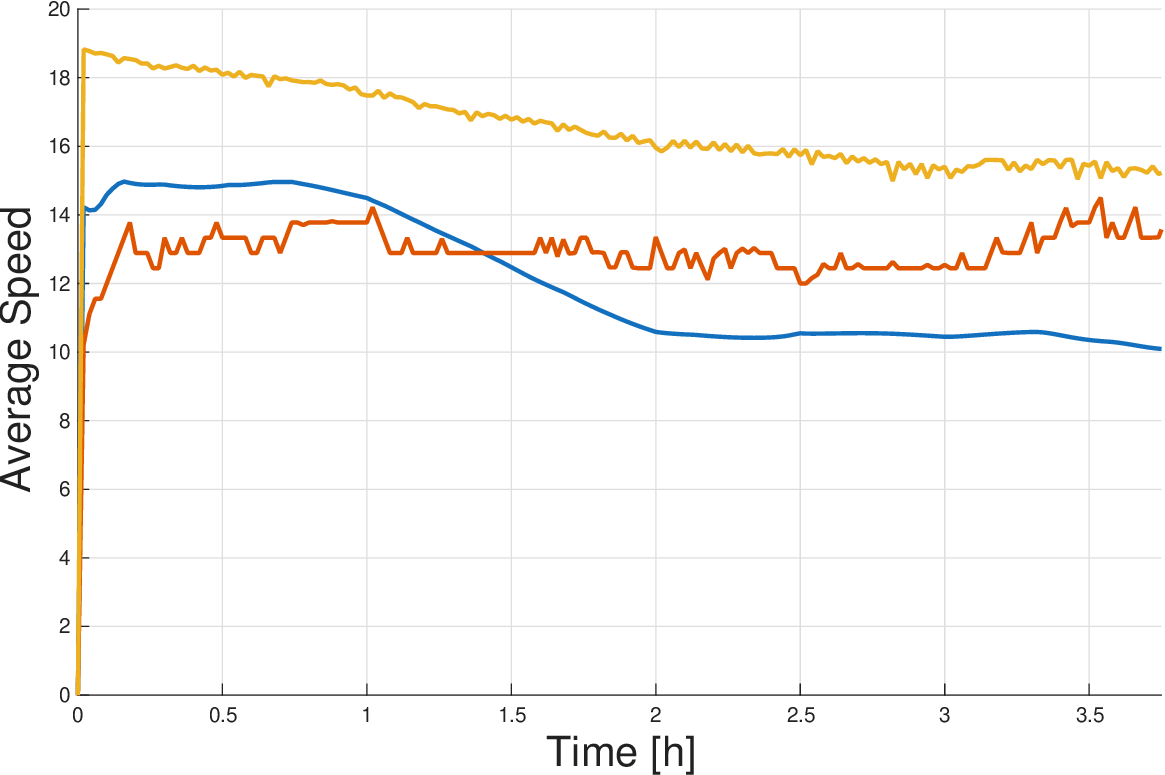}
\end{minipage}
\caption{Average Speed of cars (left) and trucks (right) in onramps under different scenarios: in blue the uncontrolled dynamics, in red the optimal control one, and in yellow the partial control of cars.}
\label{fig:pems_speed_on}
\end{figure}

\subsection{Single-commodity Comparison}
As a last study, we will now investigate whether controlling different commodities separately will actually improve performance with respect to determining the control action based on a single-commodity model. In particular, consider the case where the multi-commodity nature of the model is not known to the controller, so control cannot be applied to individual vehicle classes but only to the aggregate system. To this end, we shall consider each vehicle in the network to be a car, and the routing matrix is derived again from the PeMS data. We then proceed to compute the optimal variable speed limits by solving the single-commodity optimization problem and then apply them to the multi-commodity model. The results in Figure~\ref{fig:pems_single} show that the single-commodity control achieves poor performances, even worse than the uncontrolled multi-commodity dynamics, which emphasizes the importance of studying multi-commodity optimal control.

\begin{figure}
	\centering
		\includegraphics[width=0.8\linewidth]{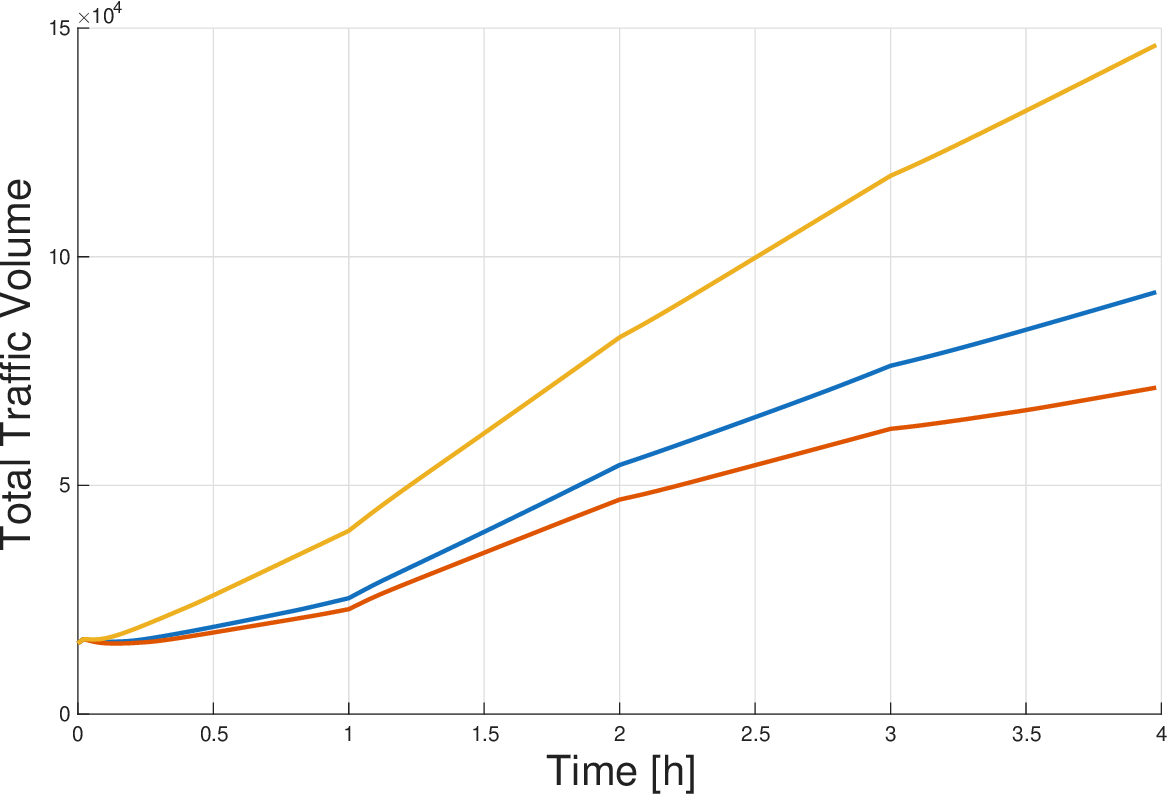}
		\caption{Total Traffic Volume comparison: in blue the uncontrolled dynamics, in red the optimal control one, and in yellow the single-commodity control dynamics.}
		\label{fig:pems_single}
\end{figure}

\section{Conclusion}
	We consider a dynamical multi-commodity flow network model based on the Cell Transmission Model. In this setting, we tackle the MC-FNC problem, an optimal control problem in which the control action is achieved through variable speed limits and ramp metering. In particular, by generalizing single-commodity research, we study a relaxation of this problem in which the total outflow of each commodity from each cell is bounded by a concave demand function, whereas the total inflow to a cell is bounded by a supply function shared by all commodities. Moreover, the design of commodity-specific variable speed limits and ramp metering yields convex formulations. Hence, the MC-FNC problem can be solved via standard tools, e.g., \textit{CVX}, as emphasized by both our synthetic and real examples.  At last, we compare single and multi-commodity control to show the benefits that come from controlling different classes of vehicles separately. Future research should focus on including the route guidance in the set of control actions and developing a distributed algorithm capable of solving the problem.

\appendix
\section{PeMS Data Calibration}\label{app:pems}
		The information and structure of the network can be seen in Table \ref{tab:1} and Figure \ref{fig:grafo_pems}.
		\begin{table}
			\centering
			\begin{tabular}{cccccc}
				%\hline
				Cell & Freeway & PM start & PM end\\
                \hline

				$e_1$& I405-S & 52.93 & 45.14\\
				$e_2$& I405-S & 44.37 & 37.08\\
				$e_3$& I405-S & 36.34 & 31.82\\
				$e_4$& I405-S & 30.55 & 20.65\\
				$e_5$& I105-E & 2.50 & 7.20\\
				$e_6$& I105-E &7.56 & 13.20\\
				$e_7$& I105-E & 13.86 & 17.30\\
				$e_8$& I110-S & 15.22 & 13.50\\
				$e_9$& I110-S & 13.33 & 9.93\\
				$e_{10}$& SR91-E & 0.56 & 5.4\\
				$e_{11}$& SR91-E & 6.20 & 10.22\\
				$e_{12}$& SR91-E & 11.37 & 18.14\\
				$e_{13}$& SR91-E & 19.17 & 23.87\\
				$e_{14}$& I5-S & 123.21 & 114.71\\
				$e_{15}$& I5-S & 113.99 & 107.25\\
				$e_{16}$& I5-S & 105.71 & 95.25\\
				$e_{17}$& I710-S & 12.52 & 10.50\\
				$e_{18}$& I710-S & 10.29 & 8.33\\
                $e_{19}$& I710-S & 7.54 & 4.29\\
                $e_{20}$& I605-S & 9.35 & 7.25\\
				\hline
			\end{tabular}
			\caption{PeMS data}
			\label{tab:1}
		\end{table}
		
		The roads in Table \ref{tab:1} have been divided in cells of length 2 miles and only the cells containing at least one sensor have been kept. Moreover, each cell is associated with both an onramp and an offramp obtained by collapsing together the existing ramps in the 2-mile section.
        \begin{table}
\centering
\setlength{\tabcolsep}{4pt} % reduce column spacing
\renewcommand{\arraystretch}{1.1} % slightly tighter rows
\begin{tabular}{lccc}
\hline
Cell & Free-flow speed (Cars and Trucks) (mph) & Len. (mi) & Lanes \\ \hline
Onramps  & 20  & 0.5 & 1 \\ 
Mainline  & 60 \text{ and } 40 & 2   & 6 \\ 
Offramps & 20 & 0.5 & 1 \\ \hline
\end{tabular}
\caption{Differences in cell characteristics by vehicle type.}
\label{tab:cell_differences}
\end{table}
        
        The cells' characteristics are displayed in Table \ref{tab:cell_differences}. Hence, we can compute the demand functions as
        $$
        d_i^{(car)}(x_i^{(car)}) = \begin{cases}
            30x_i^{(car)}\;, & \forall i \in \mc{E} \setminus \left( \mc{R} \cup \mc{S}\right)\;, \\
            40x_i^{(car)} \;, & \forall i \in \left( \mc{R} \cup \mc{S}\right)\;.
        \end{cases} 
        $$
        $$
        d_i^{(truck)}(x_i^{(truck)}) = \begin{cases}
            20x_i^{(truck)}\;, & \forall i \in \mc{E} \setminus \left( \mc{R} \cup \mc{S}\right)\;, \\
            40x_i^{(truck)} \;, & \forall i \in \left( \mc{R} \cup \mc{S}\right)\;.
        \end{cases} 
        $$

        We can then define the supply functions. However, we will assume that only mainline cells limit their total inflow, so that both onramps and offramps can receive any amount of flow. In particular,  the supply functions are constructed by taking the maximum capacity registered between February 8, 2014, and March 8, 2014, on each cell. The capacity is the intersection between the supply and the sum of the demand functions. So the supply is the affine function passing through this intersection and the point $s(\sum_{k \in \mc{K}}{w_i^{(k)}x_i^{(k)}}) = 0$, which is achieved when the maximum number of vehicles is on the cell, by assuming a vehicle split of $96\%$ cars and $4\%$ trucks.

        Lastly, we need to derive the routing matrices for both cars and trucks. To this end, we first compute the adjacency matrix $A$ of the network divided into 2-mile cells with onramps and offramps. Then, for each commodity, we take flow measurements $a^{(car)}(t)$ and $a^{(truck)}(t)$ on each cell every 5 minutes between $t_0 =$ 06:00 AM and $t_{end}$ = 06:59 PM on February 8, 2012. Hence, we compute the routing matrices of both commodities as
        $$
        R^{(k)}_{ij} = \frac{\sum\limits_{t = t_0}^{t_{end}}A_{ij}a^{(k)}_j(t)} {\sum\limits_{l \in \mc{E}} {\sum\limits_{t = t_0}^{t_{end}}A_{il}a^{(k)}_l(t)}}\;,
        $$
        where $\mc{E}$ is the set of all 2-mile cells, onramps and offramps.
		
         Finally, it is necessary to satisfy the CFL condition, so that the discretization step $h < {L}/{v_{ff}}$. For the mainline, we consider $v_{ff} = v_{ff,car} = 60$ mph, we achieve $h < {1}/{30}$ hours. On the other hand, for the ramps we consider $v_{ff} = 20$ mph to achieve $h < {1}/{40}$ hours. Hence, we shall consider a discretization step of $h = 0.02$ hours $= 72$ seconds.
		\bibliographystyle{elsarticle-num}
		\bibliography{references}
\end{document}